\definecolor{cite}{RGB}{44,123,182}
\definecolor{ref}{RGB}{215,25,28}
\newtheorem{thm}{Theorem}
\newtheorem{pro}[thm]{Proposition}
\newtheorem{lem}[thm]{Lemma}
\theoremstyle{definition}
\newtheorem{dfn}[thm]{Definition}
\newtheorem{rem}[thm]{Remark}
\DeclareMathOperator{\Hom}{Hom}
\DeclareMathOperator{\mult}{mult}
\DeclareMathOperator{\Spec}{Spec}
\DeclareMathOperator{\gr}{gr}
\newcommand{\oo}{\mathcal{O}}
\newcommand{\QQ}{\mathbb{Q}}
\newcommand{\HH}{\mathbb{H}}
\newcommand{\PP}{\mathbb{P}}
\newcommand{\ZZ}{\mathbb{Z}}
\newcommand{\CC}{\mathbb{C}}
\newcommand{\LL}{\mathbb{L}}
\newcommand{\TT}{\mathbb{T}}
\begin{document}
\title{A Prym hypergeometric}

\author[A.Corti]{Alessio Corti}
\address{AC: Department of Mathematics\\ Imperial College London\\ London SW7
  2AZ UK} 
\email{a.corti@imperial.ac.uk}

\author[G. Gugiatti]{Giulia Gugiatti}
\address{GG: Math Section \\ ICTP \\ Leonardo Da Vinci Building,
  Strada Costiera 11\\
  34151 Trieste, Italy} 
\email{ggugiatt@ictp.it}

\author[F. Rodriguez Villegas]{Fernando Rodriguez Villegas}
\address{FRV: Math Section \\ ICTP \\ Leonardo Da Vinci Building,
  Strada Costiera 11\\
  34151 Trieste, Italy} 
\email{villegas@ictp.it}

\subjclass[2020]{}
\keywords{}

\begin{abstract} We study a hypergeometric local system that
  arises from the quantum Chen--Ruan cohomology of a family of
  weighted del Pezzo hypersurfaces. We
  prove that it is the anti-invariant variation of a pencil of
  genus-$7$ curves with respect to an involution having $4$ fixed points.   
\end{abstract}

\maketitle
\setcounter{tocdepth}{1}
\tableofcontents{}

 
\section{Introduction}
\label{sec:introduction}
Consider the hypergeometric function	 defined by the complex power series:
\begin{equation} \label{eq:hgm-function}
F(\alpha)	=\sum _{j=0}^{\infty} \frac{(18j)! j! }{(2j)! (3j)! (5j)! (9j)!} \alpha^j
\end{equation} 
This function satisfies an irreducible order--$8$ hypergeometric operator $H$ on $\CC^\times$ singular at $\alpha_0=\frac{5^5}{3^{15} 2^{16}}$--- written out explicitly in Equation~\eqref{eq:irreducible} below.
The local system $\HH$ of its solutions is a complex irreducible local system of rank 8 on $U=\CC^\times \setminus \{\alpha_0 \}$. By~\cite[Theorem 5.4.4]{NK} $\HH$ supports a canonical rational variation of (pure) Hodge structures (VHS) of weight one~\cite[Section 1.2]{CG}. 

Our main result is the following.


\begin{thm}\label{thm:main}
Consider the pencil of genus-7 curves $\widetilde{N}_\alpha$ defined by the equation:
\begin{equation} \label{eq:Ntilde} \left(4  y^3 -\alpha x_0 x_1^2 y^2 +	4\alpha^3 x_1^6 y  +\alpha x_0^9  =0
  \right) \subset \PP(1,1,3)
\end{equation}
where $x_0,x_1,y$ are homogeneous coordinates of weights $1,1,3$, and let   
 $P_\alpha \colon \widetilde{N}_\alpha \to N_\alpha$ be the quotient by the involution $x_1 \mapsto -x_1$.  
Then, for all $\alpha \in \CC^\times \setminus \{\alpha_0 \}$:
\begin{equation}
  \HH_\alpha = H^1(\widetilde{N}_\alpha, \ZZ)^-\otimes_\ZZ \QQ
\end{equation} where $H^1(\widetilde{N}_\alpha, \ZZ)^-$ are the antiinvariants under the involution. In particular this endows $\mathbb{H}$ with an integral structure.
\end{thm}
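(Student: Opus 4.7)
Both $\HH$ and $H^1(\widetilde{N}_\alpha, \QQ)^-$ are $\QQ$-local systems of rank $8$ on $U$ carrying weight-$1$ VHS. Since $\HH$ is irreducible, it suffices to produce a nonzero morphism of local systems between them: such a morphism is automatically surjective by irreducibility of the target, and an isomorphism by equality of ranks. A Jacobian calculation on $\PP(1,1,3)$ should show that $\widetilde{N}_\alpha$ is smooth of genus $7$ for all $\alpha \in U$. The involution $\iota \colon x_1 \mapsto -x_1$ has fixed locus $\{x_1 = 0\} \cup \{[0:1:0]\}$ in $\PP(1,1,3)$; intersecting with $\widetilde{N}_\alpha$ yields three points on the weighted-projective curve $4 y^3 + \alpha x_0^9 = 0 \subset \PP(1,3)$, plus the single point $[0:1:0]$, for a total of four fixed points. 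Riemann--Hurwitz then gives $g(N_\alpha) = 3$ and $\dim_\QQ H^1(\widetilde{N}_\alpha, \QQ)^- = 2(g(\widetilde{N}_\alpha) - g(N_\alpha)) = 8$, matching the rank of $\HH$ and exhibiting the $4$-dimensional Prym of $\widetilde{N}_\alpha \to N_\alpha$ as the geometric object underlying the hypergeometric in the title.

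\textbf{Period computation.} Since the weighted Euler volume form $\Omega$ on $\PP(1,1,3)$ satisfies $\iota^* \Omega = -\Omega$ and $F$ is $\iota$-invariant, anti-invariant holomorphic $1$-forms on $\widetilde{N}_\alpha$ are produced by the Poincar\'e residue
\[
\omega_\alpha = \mathrm{Res}_F \bigl( P \, \Omega / F \bigr)
\]
for $P$ ranging over the $4$-dimensional space of $\iota$-invariant weighted-degree-$4$ polynomials, spanned by $x_0 y,\ x_0^4,\ x_0^2 x_1^2,\ x_1^4$. For a vanishing cycle $\gamma_\alpha$ of the nodal degeneration at $\alpha_0$ (anti-invariant since the vanishing node lies off the fixed locus of $\iota$), and for a carefully chosen $P$, one computes the period $p(\alpha) = \int_{\gamma_\alpha} \omega_\alpha$ by expanding $1/F$ as a geometric series and extracting a weighted-Cauchy residue. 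The four monomials $y^3,\ x_0 x_1^2 y^2,\ x_1^6 y,\ x_0^9$ of $F$ all have weighted degree $9$, and multinomial expansion should produce exactly the factorial ratio $(18j)!\,j!/[(2j)!(3j)!(5j)!(9j)!]$, identifying $p(\alpha)$ with a nonzero scalar multiple of $F(\alpha)$. Hence $p(\alpha)$ is annihilated by the irreducible order-$8$ operator $H$, and by Gauss--Manin compatibility this extends to a nonzero morphism $H^1(\widetilde{N}_\alpha, \QQ)^- \to \HH$, which is an isomorphism by the rank argument above. The identification is automatically compatible with the weight-$1$ VHS on both sides and endows $\HH$ with the integral lattice $H^1(\widetilde{N}_\alpha, \ZZ)^-$.

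\textbf{Main obstacle.} The decisive technical step is the residue computation: one must verify that the precise coefficients $4, -\alpha, 4\alpha^3, \alpha$ in \eqref{eq:Ntilde}, combined with the right choice of $P$, conspire to produce exactly the unusual factorial ratio of $F(\alpha)$, and not some closely related but distinct hypergeometric series. A secondary subtlety is the coordinated choice of $P$ and $\gamma_\alpha$ so that $p(\alpha)$ is the unique (up to scalar) holomorphic solution of $H$ at $\alpha = 0$, with correct local monodromy around $\alpha_0$ and $\infty$ matching the Riemann scheme read off from the factorials in \eqref{eq:hgm-function}.
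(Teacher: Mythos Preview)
Your proposal takes a genuinely different route from the paper. The paper never computes a period on $\widetilde N_\alpha$. Instead it (i)~identifies $\HH$ with $\gr_3^W H^3_c(Z_\alpha,\QQ)(1)$ for an explicit affine threefold $Z_\alpha$, via GKZ/Stienstra theory (Proposition~\ref{pro:our-claim}); (ii)~partially compactifies $Z_\alpha$ to a conic bundle $X_\alpha\to(\CC^\times)^2$; (iii)~applies a sheaf-theoretic description of $R^2\pi_\star\ZZ$ for conic bundles (Theorem~\ref{thm:sheaf-conic-bdl}) plus Leray and weight bookkeeping to get $\gr_3^W H^3_c(X_\alpha,\ZZ)\cong\bigl(\gr_1^W H^1_c(\Delta,\mathbb W)\bigr)(-1)$; and (iv)~recognises the right-hand side as $H^1(\widetilde N_\alpha,\ZZ)^-$ after compactifying the discriminant double cover. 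So the hypergeometric identification happens on the threefold, and is then transported to the curve by geometry; no curve period is ever evaluated.

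Your rank/irreducibility framework and fixed-point count are correct. Two comments. First, a small gap: the sentence ``by Gauss--Manin compatibility this extends to a nonzero morphism'' skips a step. Knowing $H\bigl(\int_{\gamma_0}\omega\bigr)=0$ does not by itself produce a map of local systems; you need that every monodromy translate $\int_{\ell_*\gamma_0}\omega$ is also annihilated by $H$ (true, since $\omega$ is single-valued), and then irreducibility of $\HH$ forces the monodromy span of $\gamma_0$ to be all of $H_1^-$. This is easy to fix but should be stated.

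Second, and this is the real issue: the period computation you flag as the main obstacle is not a routine multinomial expansion, and the paper's whole architecture exists to avoid it. The curve $\widetilde N_\alpha$ was not designed so that a residue yields $\sum\frac{(18j)!\,j!}{(2j)!(3j)!(5j)!(9j)!}\alpha^j$; it arose as the compactified discriminant double cover of a conic bundle. The affine relation among the Newton-polytope vertices of $\widetilde\Delta$ (equivalently of $\Delta$) is $(-9,1,3,5)$, not $(-18,-1,2,3,5,9)$: indeed the paper notes that $H^1(N_\alpha,\QQ)$ is the hypergeometric for $\gamma=(-9,1,3,5)$. So the naive torus-period expansion on $\widetilde N_\alpha$ produces a different, smaller hypergeometric, and isolating the anti-invariant rank-$8$ piece inside the rank-$14$ system is exactly the nontrivial content. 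Moreover at $\alpha=0$ the curve degenerates to the triple line $y^3=0$, so the geometric-series expansion of $1/F$ around $\alpha=0$ is not available in any obvious chart. Your approach is in principle viable---the theorem guarantees the computation \emph{must} come out right---but carrying it out would require substantially more than ``expanding $1/F$ as a geometric series'', and you have not indicated how. The paper's conic-bundle argument is what buys you the six-term factorial ratio without ever touching a curve integral.
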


\begin{rem}
  The curves $\widetilde{N}_\alpha, N_\alpha$ are nonsingular for all $\alpha \in \CC^\times \setminus \{\alpha_0\}$, and the involution $x_1 \mapsto -x_1$ on $\widetilde{N}_\alpha$ has four fixed points, hence $H^1(\widetilde{N}_\alpha, \QQ)^-$ has rank $8$ --- see Remark~\ref{rem:map-P} .
\end{rem}

Theorem~\ref{thm:main} follows from a general result on conic bundles over surfaces 
which is of independent interest (we set out our terminology for conic bundles in Section~\ref{sec:conic-notions}). 

\begin{thm} \label{thm:sheaf-conic-bdl} Let $\pi \colon  X \to S$ be a conic bundle over a surface $S$ with ramification data  $p \colon \widetilde{\Delta} \to \Delta$, and let $\mathbb{W}$ be the subsheaf of $p_\star \ZZ_{\widetilde{\Delta}}$ of anti-invariants with respect to the involution on $\widetilde{\Delta}$. Let $i \colon \Delta \hookrightarrow S$ be the inclusion. 

There is a short exact sequence of mixed sheaves on $S$:
\begin{equation} \label{eq:R^2-and-W}
0 \to i_\star \mathbb{W} \to R^2 \pi_\star \ZZ_X(1) \to \ZZ_S \to 0	
\end{equation} 
\end{thm}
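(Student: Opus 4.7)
The plan is to construct the two maps of~\eqref{eq:R^2-and-W} directly and then verify exactness on stalks, using proper base change to reduce to computations of $H^2$ of individual conic fibres. Over $s\in S\setminus\Delta$ the fibre is a smooth $\PP^1$ and $H^2(X_s,\ZZ(1))=\ZZ$; over a generic point of $\Delta$ the fibre is a nodal conic $\ell_1\cup\ell_2$ and $H^2(X_s,\ZZ(1))=\ZZ[\ell_1]\oplus\ZZ[\ell_2]$; over the remaining points of $\Delta$ permitted by the conic bundle definition of Section~\ref{sec:conic-notions} (double-line fibres and, if present, fibres over singularities of $\Delta$) the underlying topological space of the fibre is again $\PP^1$, so $H^2(X_s,\ZZ(1))=\ZZ$.

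The surjection $R^2\pi_\star\ZZ_X(1)\twoheadrightarrow\ZZ_S$ is the fibrewise trace, equivalently the cycle class of a point in each fibre: on the open $S\setminus\Delta$ it is the canonical isomorphism of $R^2\pi_\star\ZZ_X(1)$ with the constant sheaf $\ZZ$, and it extends to all of $S$ because $\Delta$ is nowhere dense in the surface $S$, so $j_\star\ZZ_{S\setminus\Delta}=\ZZ_S$. On stalks the map reads $[\PP^1]\mapsto 1$, $a[\ell_1]+b[\ell_2]\mapsto a+b$ and $[\text{double line}]\mapsto 1$, hence is surjective with the expected kernels.

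For the map $i_\star\mathbb{W}\to R^2\pi_\star\ZZ_X(1)$ I would use cycle classes of branches of the singular fibres. Let $\nu\colon\widetilde{X}_\Delta\to X_\Delta=\pi^{-1}(\Delta)$ be the normalisation: then $\widetilde{\pi}\colon\widetilde{X}_\Delta\to\widetilde{\Delta}$ is a $\PP^1$-bundle, each point of $\widetilde{\Delta}$ parametrising a branch of the corresponding singular fibre, and the composition $\alpha\colon\widetilde{X}_\Delta\to X$ is proper with image a divisor. The relative Gysin morphism for $\alpha$ with respect to $\pi$ gives
\[
i_\star p_\star\ZZ_{\widetilde{\Delta}}\;=\;(\pi\circ\alpha)_\star \ZZ_{\widetilde{X}_\Delta}\;\longrightarrow\; R^2\pi_\star\ZZ_X(1),
\]
and I restrict to the anti-invariant subsheaf $i_\star\mathbb{W}$. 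Stalkwise at a generic $\delta\in\Delta$ the generator $\tilde\delta_1-\tilde\delta_2$ of $\mathbb{W}_\delta$ is sent to $[\ell_1]-[\ell_2]$, which lies in the kernel of the trace, so the composition with the trace vanishes.

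Exactness is then checked stalkwise. Away from $\Delta$ it is immediate, and at a generic $\delta\in\Delta$ both $\mathbb{W}_\delta$ and $\ker(\mathrm{tr})$ have rank $1$ and are identified by the explicit formula above. The main obstacle is the verification at the special points of $\Delta$: there $\mathbb{W}$ vanishes for tautological reasons (an integer fixed by an involution and equal to its negative is zero), so the work is to show that no extra class appears in $R^2\pi_\star\ZZ_X(1)$, i.e.\ that the trace is already an isomorphism on these stalks. This requires a short case analysis over the local models of $(X,\pi)$ allowed by the conic bundle definition of Section~\ref{sec:conic-notions}, but each case reduces to a direct cohomology computation on the local model.
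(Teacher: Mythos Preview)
Your proposal is correct and follows essentially the same route as the paper: compute the stalks of $R^2\pi_\star\ZZ_X(1)$ by proper base change, build the surjection to $\ZZ_S$, identify its kernel over $\Delta$ with $\mathbb{W}$, and finish by a stalkwise check at the special points of $\Delta$.

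The only noteworthy difference is in how the surjection is produced. You obtain it by composing with the adjunction $R^2\pi_\star\ZZ_X(1)\to j_\star j^\star R^2\pi_\star\ZZ_X(1)\simeq j_\star\ZZ_U=\ZZ_S$; the paper instead uses the relative Poincar\'e morphism $\ZZ_X[2](1)\to\pi^!\ZZ_S$, which after applying $R^0\pi_\star$ and Verdier duality gives $R^2\pi_\star\ZZ_X(1)\to R^0\pi_\star\pi^!\ZZ_S\simeq\ZZ_S$ directly as a morphism of mixed sheaves, with stalks the usual Poincar\'e maps $H^2(X_s,\ZZ)(1)\to H_0(X_s,\ZZ)$. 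Both constructions yield the same map $a[\ell_1]+b[\ell_2]\mapsto a+b$, but the paper's packaging makes the compatibility with the mixed structure automatic. Similarly, where you build $i_\star\mathbb{W}\to R^2\pi_\star\ZZ_X(1)$ via the Gysin map from the normalisation of $X_\Delta$, the paper instead identifies $i^\star R^2\pi_\star\ZZ_X(1)\simeq p_\star\ZZ_{\widetilde\Delta}$ (using a component of the fibre product $\widetilde\Delta_0\times_{\Delta_0}X_{\Delta_0}$) and reads off the kernel of the trace as $\mathbb{W}$ from the defining sequence of $\mathbb{W}$; again, the underlying cycle-class computation is the same as yours. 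One small clarification: in the paper's setup the double-line fibres and the fibres over $\operatorname{Sing}\Delta$ are the \emph{same} locus $\Delta_{\mathrm s}$, so your ``case analysis at special points'' is a single case.
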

We prove Theorem \ref{thm:sheaf-conic-bdl} in Section \ref{sec:conic-proof}.

\begin{rem} If $ X \to S$ is a conic bundle over a rational projective surface $S$, the works \cite{Bea1, Bel, Bea2} describe $H^3(X, \ZZ)$ in terms of the ramification data $ \widetilde{\Delta} \to \Delta$.  Our Theorem \ref{thm:sheaf-conic-bdl} is a sheaf-theoretic generalisation of these results which applies to conic bundles $ X \to S$ where $X$ is not necessarily proper.  \end{rem}

\subsection{Motivation and context} 
Our motivation to study the hypergeometric local system $\HH$ is twofold.
On the one hand $\HH$ supports a VHS of weight one,
hence it is natural to ask if $\HH$ is related to the variation of $H^1$ of a pencil of curves. 
Our Theorem \ref{thm:main} proves that this is indeed the case.

On the other hand, 
the function $F$ in Equation \eqref{eq:hgm-function} is, up to a shift in $\alpha$, a specialisation of the regularised quantum period of the family of log del Pezzo surfaces $X_{18} \subset \PP(2,3,5,9)$. This is an almost immediate consequence of \cite[Proposition 37]{ACGG}. Thus our Theorem \ref{thm:main} implies that the pencil of curves $\widetilde{N}_\alpha$  is a Landau--Ginzburg mirror of this family.

The family $X=X_{18} \subset \PP(2,3,5,9)$ appears as one of the sporadic cases of the classification of anticanonical quasismooth and wellformed log del Pezzo surfaces, by Johnson and Koll\'ar \cite[Theorem 8]{JK}. The mirror construction we present here is the only one that we know for the surfaces $X$. Indeed, since the anticanonical linear system of $X$ is empty, none of the standard methods apply, see \cite[Remark 2.7]{ACGG}.
\smallskip

 We refer to \cite[Section 2--9]{R-V} for an introduction to the geometry of hypergeometric motives. We refer to \cite[Sections~1,2]{ACGG} for our notion of mirror symmetry for Fano varieties with quotient singularities. 


\subsection{Hypergeometric GKZ systems}
In Section \ref{sec:3folds} we consider an explicit affine threefold $Z_\alpha$ depending on the paramater $\alpha \in \CC^\times$, see for details Equation \eqref{eq:-generalLaurent-poly},
and we state that the VHS supported by $\HH$ is the variation $\gr^W_3 H^3_c(Z_\alpha, \QQ)(1)$ over $\alpha \neq \alpha_0$  (Proposition \ref{pro:our-claim}). 

This identity of VHS follows from a general result by Stienstra \cite{Stienstra}, which shows that certain $\mathcal{D}$-modules associated to Gel'fand--Kapranov--Zelevinsky hypergeometric systems (GKZ systems) are related to relative cohomology modules $H^{\bullet}(\TT, Z_{v}, \QQ)$, where $Z_{v}=(f_v=0) \subset \TT$ is the hypersurface cut out of the torus $\TT$ by a parametric Laurent polynomial $f_v$ built out of the GKZ data.  The hypergeometric $\mathcal{D}$-module associated to our operator $H$ arises from the restriction of such a $\mathcal{D}$-module to a hyperplane in the parameter space, and the pencil $Z_\alpha$ is obtained from the restriction of the corresponding parametric family $Z_{v}$ to the same hyperplane.

\subsection{Sketch of the proof of Theorem \ref{thm:main}} The starting point for the proof of Theorem \ref{thm:main} is the pencil of threefolds $Z_\alpha$ of Section \ref{sec:3folds} where $\; \HH=\gr^W_3 H^3_c(Z_\alpha, \QQ)(1)$ over $\alpha \neq \alpha_0$. 

In Section~\ref{sec:conic-structure}, we construct a partial compactification $Z_\alpha \subset X_\alpha \subset (\CC^\times)^2 \times \PP^2$ such that the projection $\pi \colon  X_\alpha  \to (\CC^\times)^2$ to the first factor is a conic bundle.

Once we have the conic bundle, the rest of the proof follows from an application of Theorem \ref{thm:sheaf-conic-bdl} to the case at hand, together with manipulations in mixed Hodge theory. 
The quotient map $P \colon \widetilde{N}_\alpha \to  {N}_\alpha$ 
of Theorem \ref{thm:main} 
arises from the ramification data of the conic bundle $\pi \colon  X_\alpha  \to (\CC^\times)^2$. We believe that Theorem \ref{thm:sheaf-conic-bdl} is the easiest way to study the Hodge structure $\gr^W_3 H^3_c(X_\alpha, \QQ)$. 

\smallskip

The map $P  \colon \widetilde{N}_\alpha \to  {N}_\alpha$
is a double cover of a genus $3$-curve with  branch divisor $B_\alpha$ of degree $4$, see Remark \ref{rem:map-P}. By~\cite[Theorem 9.14]{MR1360615} the Prym map $\mathcal{R}_{3,4} \to \mathcal{A}_4^\delta$, $\delta=(1,2,2,2)$, is a dominant morphism generically of degree $3$, but note that the construction \cite[9.1]{MR1360615} does not apply here since the linear system $|B_\alpha|$ has a base point, see Remark~\ref{rem:map-P}.


\subsection{Structure of the paper} Section \ref{sec:initial-set-up} contains the initial setup. In Section \ref{sec:3folds} we  construct the pencil of threefolds $Z_\alpha$ and we state Proposition \ref{pro:our-claim}; in Section \ref{sec:conic-structure} we build the partial compactification $Z_\alpha \subset X_\alpha$. In Section \ref{sec:conic}  we prove Theorem \ref{thm:sheaf-conic-bdl}. In Section \ref{sec:proof} we prove Theorem \ref{thm:main}. In the Appendix we prove Proposition \ref{pro:our-claim}. 

\subsection{Acknowledgements} We thank Fabrizio Catanese, Tom Coates, Juan Carlos Naranjo, Gian Pietro Pirola, and Alessandro Verra for helpful discussions. It is a pleasure and an honour to have the opportunity to contribute to the Collino Volume. AC is partially supported by EPSRC Program 
Grant EP/N03189X/1.



\section{Initial setup} 
\label{sec:initial-set-up}
In this section we introduce the main objects involved in the proof of Theorem \ref{thm:main}.

\medskip

\subsection{A family of threefolds} \label{sec:3folds} Consider the
series $F$ in Equation \eqref{eq:hgm-function}. Following
\cite{FRV-DUKE} and \cite[Section 2]{R-V}, we will encode the data defining the coefficients of
$F$ in the gamma list $\gamma=(-18,-1,2,3,5,9)$.  The function $F$ is
a solution to the order-$8$ irreducible hypergeometric operator on
$\CC^\times$:
\begin{equation} \label{eq:irreducible}
	H=\alpha_0 \cdot  D\left(D-\frac{1}{3}\right) \left(D-\frac{2}{3}\right) \prod_{n=0}^{4} \left(D-\frac{n}{5}\right)  - \alpha\prod_{\substack{n=0\\  n \neq 4} }^{8}\left(D+\frac{2n+1}{18}\right)
\end{equation}
where $\alpha_0$ is as in the Introduction and $D=\alpha \frac{d}{d \alpha}$. The local system $\HH$ of the Introduction is the local
system of solutions of $H$.
\begin{rem}
The connection between  the hypergeometric parameters in \eqref{eq:irreducible} and the list $\gamma$ is given by the monodromy representation  
$\rho \colon \pi_1(\PP^1 \setminus \{0, \alpha_0, \infty \}) \to \mathrm{GL}(\CC^8)$ of $H$. 
Let $\gamma_0, \gamma_{\alpha_0}, \gamma_{\infty}$ be three generators of \mbox{$\pi_1(\PP^1 \setminus \{0, \alpha_0, \infty \})$} such that $\gamma_\infty \gamma_{\alpha_0} \gamma_0=1$, and let $q_\infty$ and $q_0$ be the characteristic polynomials of $\rho(\gamma_\infty)$ and $\rho(\gamma_0)^{-1}$. Then by \cite[Section 3]{BH} 
\[ 
\frac{q_\infty}{q_0}(\alpha)=  \frac{\phi_{18} \phi_1 }{\phi_2 \phi_3 \phi_5 \phi_9} (\alpha)=
\frac{ (\alpha^{18}-1)(\alpha -1) }{ (\alpha^2-1)(\alpha^3-1) (\alpha^5-1) (\alpha^9-1) }
\] 
where $\phi_n$ denotes the $n$-th cyclotomic polynomial.
\end{rem}
\medskip

Now consider vectors $m_1, \dots, m_6 \in \ZZ^4$ whose affine span is primitive and such that $\gamma$ spans their affine relations, that is, $\sum_{i=1}^{6} \gamma_i \; m_i=0$.
Let $u_1, \dots, u_4$ be coordinates on  the torus $\TT=\mathrm{Hom}(\ZZ^4, \CC^\times) \simeq (\CC^\times)^4$. Pick integers $k_1, \dots, k_6$ such that $\sum_{i=1}^{6} k_i \gamma_i=1$, and consider the Laurent polynomial
\begin{equation}\label{eq:-generalLaurent-poly}
f_\alpha(u)=\sum_{i=1}^{6}  (-\alpha)^{k_i} {u}^{m_i}
\end{equation} where $u=(u_1, \dots, u_4)$, ${u}^{m}\coloneqq u_1^{m_1}u_2^{m_2} u_3^{m_3} u_4^{m_4}$ for $m=(m_1, \dots, m_4) \in \ZZ^4$ and $\alpha$ is a parameter in $\CC^\times$. 
Denote by $Z_\alpha$ the threefold $Z_\alpha =(f_\alpha=0) \subset \TT$.
It is easy to check that $Z_\alpha$ is singular if and only if $\alpha=\alpha_0$. The threefold $Z_\alpha$ is refered to as the \emph{toric model} associated to $\gamma$ in \cite[Section 3]{R-V}.
\begin{rem} \label{rem:unique}
The vectors $m_i$ 
are uniquely determined up to invertible affine linear transformations of $\ZZ^4$. Therefore, the associated hypersurface $Z_\alpha \subset \TT^4$ is uniquely determined up to isomorphism.
\end{rem}

In the Appendix we prove that:
\begin{pro} \label{pro:our-claim}
The local system $\HH$ is the variation $	\gr_3^W H^3_c(Z_\alpha, \QQ)(1)	$, $\alpha \in \CC^\times \setminus \{\alpha_0\}$. 
\end{pro}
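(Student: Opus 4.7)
The plan is to deduce the proposition from a theorem of Stienstra~\cite{Stienstra} identifying certain GKZ $\mathcal{D}$-modules with Gauss--Manin $\mathcal{D}$-modules of families of torus hypersurfaces, combined with a weight-filtration computation on the geometric side.

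First I would consider the full multi-parameter Laurent polynomial $f_v(u)=\sum_{i=1}^{6} v_i u^{m_i}$ on $\TT=(\CC^\times)^4$ together with its hypersurface family $Z_v\subset\TT$. Stienstra's theorem identifies the relative cohomology $\mathcal{D}$-module $\mathcal{H}^4(\TT,Z_v,\QQ)$ on the parameter space with the GKZ $\mathcal{A}$-hypergeometric $\mathcal{D}$-module attached to the augmented vectors $(1,m_i)\in\ZZ^5$ and a parameter determined by $\gamma$. Restricting to the one-parameter line $v_i=(-\alpha)^{k_i}$ and translating exponents via the standard Horn--Mellin dictionary (as in \cite[Section 3]{BH}) yields an order-$8$ hypergeometric $\mathcal{D}$-module on $\CC^\times$ whose singular set $\{0,\alpha_0,\infty\}$ and local exponents exactly match those of $H$; irreducibility of $H$ then forces the restriction to coincide with $\HH$.

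Next I would match this to $\gr^W_3 H^3_c(Z_\alpha,\QQ)(1)$ using standard mixed Hodge theory. The long exact sequence of the pair $(\TT,Z_\alpha)$, together with the fact that $H^k(\TT,\QQ)$ is pure Hodge--Tate of weight $2k$, shows that $H^3(\TT)$ and $H^4(\TT)$ contribute only in weights $6$ and $8$; combined with the Andreotti--Frankel vanishing $H^4(Z_\alpha)=0$ this gives
\begin{equation*}
\gr^W_3 H^3(Z_\alpha,\QQ)\;\cong\;\gr^W_3 H^4(\TT,Z_\alpha,\QQ).
\end{equation*}
Poincar\'e duality for the smooth affine threefold $Z_\alpha$ (valid for $\alpha\neq\alpha_0$) gives $H^3_c(Z_\alpha,\QQ)\cong H^3(Z_\alpha,\QQ)^\vee(-3)$; taking $\gr^W_3$ and twisting by $\QQ(1)$ identifies the left-hand side with the weight-$1$ polarized VHS dual to $\gr^W_3 H^3(Z_\alpha,\QQ)$, which is canonically isomorphic to itself via the hypergeometric polarization. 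A direct rank count confirms that the GKZ/Stienstra system has rank~$8$, matching the order of $H$, so no further piece need be extracted.

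The main obstacle I expect is the careful bookkeeping of parameters, Tate twists and polarizations when restricting Stienstra's multi-parameter GKZ $\mathcal{D}$-module to the one-parameter line $v_i=(-\alpha)^{k_i}$, and matching the resulting weight-$1$ polarized VHS with $\HH$ in the normalization of~\cite{NK}. One has in particular to verify that the Gauss--Manin connection on $\mathcal{H}^4(\TT,Z_v,\QQ)$ is exactly the GKZ connection for the augmented lattice $(1,m_i)$, and that this identification survives restriction to the $\alpha$-line. Once these conventions are fixed, the argument is essentially formal.
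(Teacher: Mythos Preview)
Your plan has the right shape but a genuine gap at its core: the restriction of the GKZ system to the $\alpha$-line is \emph{not} an order-$8$ operator, and the rank count does \emph{not} give $8$. The holonomic rank of the GKZ $\mathcal{D}$-module attached to the augmented vectors $(1,m_i)$ equals the normalised volume of the Newton polytope $\Delta$, which here is $19$. Correspondingly, the restricted operator is the reducible order-$19$ hypergeometric operator $\widetilde H$ of which $H$ is an irreducible factor ($\widetilde H = G\cdot H$ with $G$ of order $11$), and the relative cohomology $H^{n+1}(\overline{\TT},Z_F,\QQ)$ that Stienstra's theorem produces has dimension $19$, not $8$. So the assertion ``restricting \dots\ yields an order-$8$ hypergeometric $\mathcal D$-module \dots\ irreducibility of $H$ then forces the restriction to coincide with $\HH$'' is false as stated, and the last paragraph's ``direct rank count confirms rank $8$'' is simply wrong.

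What is actually needed, and what the paper does, is to extract the rank-$8$ piece from the rank-$19$ object. After identifying the restricted GKZ module with $H^5(\overline{\TT},Z_F,\QQ)$ and passing to the primitive part $PH^3_c(Z_f,\QQ)$ (rank $18$), one computes the Hodge--Deligne numbers of $PH^3_c(Z_f,\QQ)$ via the Batyrev--Borisov formula; these show that $\gr^W_3 PH^3_c(Z_f,\QQ)$ has rank exactly $8$ with Hodge numbers $(4,4)$. Only then does the irreducibility of $\HH$ (as a sub of the rank-$19$ local system $\widetilde\HH$) pin it down as this weight-$3$ piece. Your long-exact-sequence/Poincar\'e-duality manipulations are fine in spirit, but they cannot by themselves isolate the rank-$8$ summand; the Hodge--Deligne computation is the missing idea.

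There is also a technical point you glossed over: Stienstra's theorem requires that $A$ consist of \emph{all} lattice points of $\Delta$, and the six $m_i$ do not --- there is a seventh interior point $a_7=(0,1,0,1)$. The paper applies Stienstra to the enlarged set $A=\{a_1,\dots,a_7\}$ and then uses a restriction result (after \cite{FF}) to pass to the hyperplane $v_7=0$. You should flag and handle this, as Stienstra's hypotheses do not apply directly to $A'=\{m_1,\dots,m_6\}$.
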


\subsection{A conic bundle structure} \label{sec:conic-structure}
In the rest of the paper we will choose the vectors $m_i$ as the columns of the matrix:
\begin{equation} \label{eq:our-monomials}
	\left( \begin{array}{cccccc}
1 & 0 & 0 &  3 & 0 & 1 \\
1 & 0 & 0 &  1 & 3 & 0\\
1 & 0 & 0 &  0 & 0 & 2 \\ 
0 & 2 & 1 &  0 & 0 & 0
\end{array} \right)
\end{equation}
and we will set $(k_1, \dots, k_6)=(0,-1,0,0,0,0)$. Then the threefold $Z_\alpha \subset \TT$ is defined by the equation:
\begin{equation} \label{eq:our-choice-mi}
u_1u_2u_3+ u_4+u_1^3u_2+u_2^3+u_1u_3^2-\frac{1}{\alpha}u_4^2=0	
\end{equation}
It is clear that $Z_\alpha$ is the intersection of the threefold $X_\alpha \subset (\CC^\times)^2 \times \PP^2$ defined by the equation:
\begin{equation} \label{eq:conic-bdl-1}
\left(u_1^3u_2+u_2^3\right) x_0^2 +u_1u_2 \; x_0x_1 + u_1 \; x_1^2 +x_0x_2 -\frac{1}{\alpha} x_2^2=0	
\end{equation}
with the torus $(\CC^\times)^4 \subset (\CC^\times)^2  \times \PP^2$, where $u_1, u_2$ are coordinates on $(\CC^\times)^2$, $x_0, x_1, x_2$ are homogeneous coordinates on $\PP^2$, and, for $x_0 \neq 0$, $u_3=x_1/x_0$, $u_4=x_2/x_0$.
The projection $\pi \colon X_\alpha \to (\CC^\times)^2$ to the first factor is a conic bundle over $(\CC^\times)^2$. 

\begin{rem} Write $\{e_i\}$, $i=1,2,3,4$, for the standard basis of $\ZZ^4$. With the choice $m_1=\underline{0}$,  $m_2=(2,3,5,9)$, $(m_3,m_4,m_5, m_6)=(e_1, e_2, e_3, e_4)$, and $(k_1, \dots, k_6)=(0,1,0,0,0,0)$, one obtains the equation:  \begin{equation} \label{eq:bcm}
1+u_1+u_2+u_3+u_4 -\frac{1}{\alpha}u_1^2u_2^3u_3^5u_4^9=0	
\end{equation} 
This is the same equation associated to the list $\gamma$ in~\cite[Equation (1.1)]{BCM}.
After the change of coordinates 
\[ u_1 \mapsto \frac{u_4}{u_1u_2u_3} \quad u_2 \mapsto \frac{u_1^2}{u_3} \quad u_3 \mapsto  \frac{u_2^2}{u_1u_3}\quad u_4 \mapsto  \frac{u_3}{u_2}
\] Equation \eqref{eq:bcm}, multiplied by $u_1u_2u_3$, becomes
Equation \eqref{eq:our-choice-mi}.  The point of our choice
\eqref{eq:our-monomials} is to make the conic bundle structure
\eqref{eq:conic-bdl-1} manifest.
\end{rem}



\section{Conic bundles over surfaces}  
\label{sec:conic}
\subsection{Basic notions} \label{sec:conic-notions}

We recall some basic notions on conic bundles, see~\cite{YP} for an
extensive survey. 

\begin{dfn} A \emph{conic bundle} is a projective morphism $\pi \colon X\to
  S$ where $X$ is a nonsingular \mbox{3-fold}, $S$ is a surface,
  $\pi_\star \mathcal{O}_X =\mathcal{O}_S$ and $-K_X$ is $\pi$-ample.
\end{dfn}

It follows from this that $\pi$ is flat, $S$ is nonsingular,
$E=\pi_\ast \oo(-K_X)$ is a rank~$3$ vector bundle on $S$, and every
fibre is a conic. 

\begin{dfn}
The \emph{discriminant} of the conic bundle is the curve
 \begin{equation} \label{eq:discriminant}
 	\Delta=\{ s \in S \ | \ \pi^{-1}(s)  \ \text{is a singular conic} \}  \subset S
      \end{equation}
\end{dfn}
    
It is well-known that $\Delta \subset S$ is a nodal curve, and the set
\[ \Delta_{\mathrm{s}}= \{ s \in S \ | \ \pi^{-1}(s)  \ \text{is a a double line} \} \subset \Delta
\]
is the singular locus of $\Delta$, see Equation~\eqref{eq:minors}. Below we write

\[
  \Delta_{0}\coloneqq \Delta \setminus \Delta_{\mathrm{s}}
\]

\begin{dfn}
  The \emph{ramification data} of the conic bundle is the double cover
  $p \colon \widetilde{\Delta} \to \Delta$, where $\widetilde{\Delta}$
  is the curve parametrizing irreducible components of the singular
  conics over $\Delta$. We denote by $\tau \colon \widetilde{\Delta}
  \to \widetilde{\Delta}$ the involution of the cover.
\end{dfn}

It is well-known~\cite{Bea1, Bel, Bea2} that if $ \pi \colon X \to S $ is a 
conic bundle and $X$ is proper, then $H^3(X, \ZZ)$ is the
$\tau$-noninvariant part of $H^1(\widetilde{\Delta}, \ZZ)$.

If $X$ is not proper, then $H^3(X, \ZZ)$ --- and $H_c^3(X, \ZZ)$ ---
carries a  mixed Hodge structure, and our
Theorem~\ref{thm:sheaf-conic-bdl} states that there is a surjection
$R^2\pi_\star \ZZ_X \to \ZZ_S$ with kernel the subsheaf of $p_\star
\ZZ_{\widetilde{\Delta}}$ of anti-invariants under the
involution. This description of $R^2\pi_\star \ZZ_X$ provides a tool
to study $H^3(X,\ZZ)$ --- and $H_c^3(X, \ZZ)$ --- when $X$ is not
proper.

\smallskip

In Section \ref{sec:proof} we will use
Theorem~\ref{thm:sheaf-conic-bdl} to study $H_c^3(X_\alpha, \ZZ)$,
where $X_\alpha$ is the threefold of Equation~\ref{eq:conic-bdl-1}.
 
\smallskip 

 Let $s \in S$. In a small affine neighbourhood $ s \in U$, one can
 define $X_U\coloneqq \pi^{-1}(U) \subset U \times \PP^2$ by an
 equation of the form:
\begin{equation} \label{eq:chart}
 \sum_{0 \leq i,j \leq 2} a_{ij} x_i x_j
\end{equation} where $a_{ij} \in \CC[U]$. Then $\Delta\cap U=\left(\det(a_{ij})=0\right)$. Moreover, by a change of coordinates, one can rewrite \eqref{eq:chart} as:
\begin{equation} \label{eq:minors}
\begin{split}
 b_0 x_0^2 +b_1 x_1^2 +b_2 x_2^2=0 &\Leftrightarrow \mathrm{rank}(a_{ij})=3 \Leftrightarrow   s \notin \Delta \\
  b_0 x_0^2 +b_1 x_1^2 +c_2 x_2^2=0 &\Leftrightarrow \mathrm{rank}(a_{ij})=2 \Leftrightarrow   s \in \Delta_{0} \\
 b_0 x_0^2 +c_1 x_1^2 +c_2 x_2^2+c_3 x_1x_2=0 & \Leftrightarrow \mathrm{rank}(a_{ij})=1 \Leftrightarrow   s \in \Delta_{\mathrm{s}} 
\end{split}
\end{equation} where $b_i \neq 0$, $c_i=0$, $\mult_s(c_1)=\mult_s(c_2)=1$.

Let $\delta_k$, $k=1,2,3$, be the $2\times2$-minors of the matrix
$(a_{ij})$ obtained by deleting the $k$th row and column.
By \eqref{eq:minors}, on each irreducible component $\Delta_i$ of
$\Delta$, the double cover $p \colon \widetilde{\Delta} \to \Delta$ is
specified by a minor $\delta=\delta_k$ which does not vanish
identically along $\Delta_i$.

\subsection{Proof of Theorem \ref{thm:sheaf-conic-bdl}} \label{sec:conic-proof}

We prove Theorem~\ref{thm:sheaf-conic-bdl}, following the setup and
notation of the previous section.

\begin{lem} Let $\pi \colon X \to S$ be a conic bundle with
  ramification data $p \colon \widetilde{\Delta} \to \Delta$.
The involution $\tau$ on $\widetilde{\Delta}$ induces an involution,
  which for simplicity we still denote by $\tau$, on
  $p_\star \ZZ_{\widetilde{\Delta}}$.  Let
  $\mathbb{W} \subset p_\star \ZZ_{\widetilde{\Delta}}$ be the
  subsheaf of anti-invariants with respect to $\tau$, equivalently
  defined by the short exact sequence:
\begin{equation} \label{eq:W} 
	0 \to \mathbb{W} \to 	p_{\star} \ZZ_{\widetilde{\Delta}} \xrightarrow{p_\star}   \ZZ_{\Delta}  \to 0 
\end{equation}
where $p_{\star} \colon \ZZ_{\widetilde{\Delta}} \to   \ZZ_{\Delta}$ is the trace map.
Consider the diagram: 
\begin{equation}
  \label{eq:diagram}
\begin{tikzcd}  
X_U \ar[d, "\pi_U"] \arrow[r] &   X \ar[d, "\pi"] & \ar[d, "\pi_\Delta"] X_\Delta  \ar[l]\\
 U   \ar[r, "j"]  & \  S &  \Delta \ar[l, "i"']
\end{tikzcd}
\end{equation}
where $U=S\setminus \Delta$,  $X_U=\pi^{-1}(U)$,
$X_\Delta=\pi^{-1}(\Delta)$ and $\pi_U \colon X_U \to U$, $\pi_\Delta
\colon X_\Delta \to \Delta$ are the restrictions.

We have:
\begin{enumerate}[(i)]
\item $R^0\pi_\star \ZZ_X = \ZZ_S$ and $
 R^1\pi_\star \ZZ_X = (0)$;
\item  The sheaf $R^2\pi_\star \ZZ_X$ has stalks:
\begin{equation*}
	(R^2\pi_\star \ZZ_X)_s=H^2(X_s, \ZZ) = \left\{ \begin{array}{ll}
	\ZZ & \textrm{if  $s \in U$ }\\ 
	\ZZ \oplus \ZZ & \textrm{if  $s \in \Delta_{0}$}\\
\ZZ & \textrm{if $s \in \Delta_{s}$}
\end{array} \right.
	\end{equation*}
\item the restriction $j^\star R^2\pi_\star \ZZ_X = R^2 \pi_{U \star} \ZZ_{X_U}$ is isomorphic to  $\ZZ_U(-1)$;
\item the restriction $i^\star R^2\pi_\star \ZZ_X = R^2 \pi_{\Delta
    \star} \ZZ_{X_\Delta}$ is isomorphic to $p_\star
  \ZZ_{\widetilde{\Delta}}(-1)$. 
\end{enumerate}
\end{lem}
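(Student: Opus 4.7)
The plan is to apply proper base change to the proper, flat morphism $\pi$ and compute the fibre cohomology case by case using the local normal form in Equation~\eqref{eq:minors}.

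For (i) and (ii), proper base change gives $(R^q\pi_\star \ZZ_X)_s = H^q(X_s, \ZZ)$. Every fibre is a plane conic and is therefore connected and simply connected: a smooth $\PP^1$; the nodal union of two lines meeting at a point; or a double line whose underlying topological space is $\PP^1$. This immediately gives $R^0\pi_\star \ZZ_X = \ZZ_S$, $R^1\pi_\star \ZZ_X = 0$, and, via Mayer--Vietoris in the nodal case, the three stalks of $R^2\pi_\star \ZZ_X$ claimed in (ii).

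For (iii), over $U$ the morphism $\pi_U$ is smooth with fibre $\PP^1$, so $R^2\pi_{U\star}\ZZ_{X_U}$ is a rank-$1$ local system whose stalk $H^2(\PP^1,\ZZ) = \ZZ$ is generated by the fundamental class, an intrinsic and hence monodromy-invariant class. The Hodge type $(1,1)$ then upgrades the identification with $\ZZ_U$ to an isomorphism $\ZZ_U(-1)$ of mixed Hodge sheaves.

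Part (iv) is the main step. Over $s \in \Delta_0$ the fibre is $L_1 \cup L_2$ with $H^2(X_s,\ZZ) = \ZZ[L_1] \oplus \ZZ[L_2]$, and by the very definition of the ramification data $p^{-1}(s) = \{[L_1],[L_2]\}$; over $s \in \Delta_{\mathrm{s}}$ the fibre is a double line with $H^2(X_s,\ZZ) = \ZZ$, which matches the single ramification point of $p$. To upgrade this stalkwise bijection to a sheaf isomorphism, I would work étale-locally using the normal form~\eqref{eq:minors}: on each component $\Delta_i$ a minor $\delta = \delta_k$ not identically vanishing on $\Delta_i$ simultaneously trivialises the double cover as $\{t^2 = \delta\}$ and labels the two components of the reducible conic by the sign of $t$, producing local sections of $R^2\pi_{\Delta\star}\ZZ_{X_\Delta}$ that match the two sheets of $p$. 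The Tate twist once again records that each component class is of type $(1,1)$.

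The main obstacle will be verifying the isomorphism across $\Delta_{\mathrm{s}}$: as the two lines $L_1, L_2$ coalesce into a double line $2L$, one must check that $[L_1]+[L_2]$ limits to the generator $2[L]$ compatibly with the ramification of $p$, while $[L_1]-[L_2]$ vanishes. The square-root description $\{t^2 = \delta\}$ of $\widetilde{\Delta}$ extracted from~\eqref{eq:minors} should make this continuity transparent and complete the identification with $p_\star \ZZ_{\widetilde{\Delta}}(-1)$.
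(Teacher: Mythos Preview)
Your proposal is correct and parts (i)--(iii) essentially coincide with the paper's proof; in (iii) the paper phrases your ``monodromy-invariant fundamental class'' as the global section $c_1\oo(1)_{|X_U}\in H^0(U,R^2\pi_{U\star}\ZZ_{X_U})$, which is the same idea.

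For (iv) you take a genuinely different route. The paper constructs the isomorphism over $\Delta_0$ by a global correspondence: the fibre product $\widetilde{\Delta}_0 \times_{\Delta_0} X_{\Delta_0}$ splits as $Z_1 + Z_2$, and either component $Z$ is the tautological family of lines over $\widetilde{\Delta}_0$, inducing directly an isomorphism $z\colon p_{0\star}\ZZ_{\widetilde{\Delta}_0}\xrightarrow{\sim} R^2\pi_{0\star}\ZZ_{X_{\Delta_0}}(1)$; the extension across $\Delta_{\mathrm s}$ is then obtained by pushing forward along $\Delta_0\hookrightarrow\Delta$. Your approach instead builds the isomorphism \'etale-locally from the normal form~\eqref{eq:minors} and the minor $\delta_k$, and must then glue and check continuity at $\Delta_{\mathrm s}$ by hand --- exactly the ``main obstacle'' you flag. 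The fibre-product argument buys you this for free: the correspondence $Z$ is global over $\Delta_0$, and the pushforward step replaces your limiting analysis of $[L_1],[L_2]\to[L]$. Your local picture is what is really going on underneath, but the paper's packaging avoids the bookkeeping.
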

\begin{proof}
  To prove (i), note that for all $s \in S$
  $(R^0\pi_\star \ZZ_X)_s = H^0(X_s, \ZZ) = \ZZ$, and
  $\Gamma_S(\pi_\star \ZZ_X) = H^0(X, \ZZ) = \ZZ$. On the other hand,
  for all $s \in S$ $(R^1\pi_\star \ZZ_X)_s = H^1(X_s, \ZZ) =(0)$.

The statement in (ii) is immediate. 

To prove (iii), it is enough to notice that 
$c_1 \oo(1)_{| X_U} \in \Hom(\ZZ_U, R^2 \pi_{U \star} \ZZ_{X_U})=H^0( U,R^2 \pi_{U \star} \ZZ_{X_U})$
is a global section of $R^2 \pi_{U \star} \ZZ_{X_U}$.

To prove (iv), consider the diagram:
\begin{equation*} \label{eq:diagram}
\begin{tikzcd}  
 &  \ar[dr, ] \widetilde{\Delta}_{0}  \times_{\Delta
_0 }X_{\Delta_0} \ar[dl ] & \\
\widetilde{\Delta}_{0} \ar[dr, "p_0"]  & & \ar[dl, "\pi_0"'] X_{\Delta_0}    \\
& \Delta_0 & \end{tikzcd}
\end{equation*}
where $\widetilde{\Delta}_0=p^{-1}(\Delta_0)$, $X_{\Delta_0}=\pi^{-1}(\Delta_0)$ and $p_0 \colon \widetilde{\Delta}_0 \to {\Delta}_0$ and $\pi_0 \colon X_{\Delta_0} \to \Delta_0$ are the restrictions. The fiber product
$
\widetilde{\Delta}_{0}  \times_{\Delta_0} X_{\Delta_0}= Z_1 + Z_2
$ is the sum of two irreducible components. Denoting by $Z$ one of these components, $Z$ induces an isomorphism $z \colon  p_{0  \star} \ZZ_{\widetilde\Delta_0}  \to   R^2 \pi_{0  \star}\ZZ_{X_{\Delta_0}}(1) $.
One concludes by pushing forward both sides of this isomorphism.
\end{proof}

\subsubsection{Proof of Theorem \ref{thm:sheaf-conic-bdl} }
Consider the relative Poincar\'e morphism:
\begin{equation} 
\label{eq:relative}P \colon \ZZ_{X}[2](1) \to D_{X/S} \end{equation}
 where $ D_{X/S}=\pi^{!}\ZZ_S$ is the relative dualizing complex of $X$ over $S$. 
The functor $R^0\pi_\star$ applied to the morphism \eqref{eq:relative} induces the morphism:
\begin{equation} \label{Poincare_R^2p}
P_2 \colon  R^2\pi_\star \ZZ_X(1) \to  R^0\pi_\star  \; \pi^{!} \ZZ_S \end{equation} 
which, on each stalk, 
is the usual Poincar\'e map of $X_s$:
 \begin{equation} \label{eq:poincare-fibre}
P_{2,s} \colon H^2(X_s, \ZZ)(1) \to H_0(X_s, \ZZ)  
\end{equation} 
Note that $R^0\pi_\star \pi^{!} \ZZ_S \simeq \ZZ_S$ 
by local Verdier duality
\[R\mathit{Hom}(R\pi_! \ZZ_X, Z_S) = R \pi_\star R\mathit{Hom}( \ZZ_X,
  \pi^{!} \ZZ_S) \] and that
$P_2$ is a surjective morphism.  In fact, for $s \notin
\Delta_{0}$, the map $P_{2,s}$ is an isomorphism.
For $s \in  \Delta_{0}$, since $X_s$ is the union of two distinct lines, $H^2(X_s, \ZZ)(1) \simeq \ZZ^2$,
and map $P_{2,s}$ sends \[\ZZ^2 \ni (k_1,k_2) \mapsto k_1+k_2  \in \ZZ \] see Figure \ref{fig:disks},  thus 
$\ker(P_{2,s})=\{(k, -k) \in \ZZ^2 \}$. 
It follows that over $\Delta$ the morphism $P_2$ is the same as the morphism $p_\star \ZZ_{\widetilde{\Delta}} \to \ZZ_{\Delta}$, whose kernel is the sheaf $\mathbb{W}$ of anti-invariants with respect to $\tau$. This proves the statement of the theorem. 
\qed

\begin{figure}[ht!]
\definecolor{ududff}{rgb}{0.30196078431372547,0.30196078431372547,1}
\begin{tikzpicture}[line cap=round,line join=round,x=1cm,y=1cm, scale=0.4]
\clip(-16.43494777890499,-5.902417376357542) rectangle (15.904787752645536,5.281544148323221);
\draw [rotate around={167.82826177912065:(-1.929338355669112,2.0003560156581037)},line width=1pt] (-1.929338355669112,2.0003560156581037) ellipse (2.2060049020709944cm and 0.8115841410484816cm);
\draw [rotate around={-156.79065779003835:(-1.6962313123890054,-2.7238622938897668)},line width=1pt] (-1.6962313123890054,-2.7238622938897668) ellipse (1.904608655516325cm and 0.8555315538283338cm);
\draw [line width=1pt] (-2.025,2.1)-- (-1.155,4.8);
\draw [line width=1pt] (-3.225,0.48)-- (-1.8746160858596386,-2.7912328127593793);
\draw [line width=1pt] (-3.075,-1.11)-- (-2.325,1.26);
\draw [line width=1pt,dotted] (-2.325,1.26)-- (-2.025,2.1);
\draw [line width=1pt,dotted] (-1.8746160858596386,-2.7912328127593793)-- (-1.5215294543124387,-3.575869771753156);
\draw [line width=1pt] (-1.5215294543124387,-3.575869771753156)-- (-0.855,-5.13);
\begin{scriptsize}
\draw [fill=ududff] (-2.025,2.1) circle (4pt);
\draw[color=ududff] (-1.2812343856205934,2.0041954777190308) node {$k_1$};
\draw [fill=ududff] (-1.8746160858596386,-2.7912328127593793) circle (4pt);
\draw[color=ududff] (-1.2223866136960602,-2.3000396446964273) node {$k_2$};
\draw[color=black] (-4.2812343856205934,4.0041954777190308) node {$L_1$};
\draw[color=black] (-4.2812343856205934,-4.3000396446964273) node {$L_2$};
\end{scriptsize}
\end{tikzpicture}
\caption{\label{fig:disks} A section of $R^2 \pi_\star \ZZ$ about a degenerate fiber $X_s=L_1 \cup L_2$ looks like the union of two horizontal disks meeting $X_s$ at $k_1 p_1+k_2 p_2$, where $p_i \in L_i$. The map $P_{2,s}$ sends $k_1 p_1+k_2 p_2$ to $(k_2+k_2)p$, where $p$ is the class of the point. }
\end{figure}
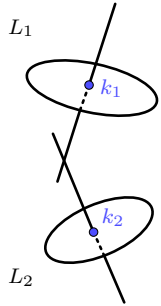




\section{Proof of Theorem \ref{thm:main}}
\label{sec:proof}
In this section we prove Theorem~\ref{thm:main} of the
Introduction. For convenience in what follows we fix $\alpha \neq
\alpha_0$ and we omit all reference to $\alpha$. 

\medskip

\paragraph{\textbf{The partial compactification $Z\subset X$.}}
In Section \ref{sec:conic-structure} we built a partial
compactification $X \subset (\CC^\times)^2 \times \PP^2$ of $Z$.
Lemma \eqref{eq:gr3} below states that
$\gr^W_3 H^3_c(X, \QQ) = \gr^W_3 H^3_c(Z, \QQ)$, thus one might
replace $Z$ with $X$ in Proposition \ref{pro:our-claim}.
The advantage of working with $X$ is that the first projection $\pi \colon X \to (\CC^\times)^2$ is a conic bundle. 

\begin{lem} \label{lem:gr3}
There is an identity of pure Hodge structures: 
\begin{equation} \label{eq:gr3}
	\gr_3^W H^3_c(Z, \ZZ)	= \gr_3^W H^3_c(X, \ZZ)
\end{equation}
\end{lem}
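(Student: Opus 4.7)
The plan is to apply the long exact sequence of mixed Hodge structures in compactly supported cohomology for the open embedding $j \colon Z \hookrightarrow X$, with closed complement $D = X \setminus Z$. Applying the exact functor $\gr_3^W$ gives
\[
\gr_3^W H^2_c(D) \to \gr_3^W H^3_c(Z) \to \gr_3^W H^3_c(X) \to \gr_3^W H^3_c(D),
\]
and the Lemma will follow once both outer terms are shown to vanish.

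The left vanishing is immediate from Deligne's bound that the MHS on $H^k_c(V, \QQ)$ has weights $\leq k$ for any complex variety $V$: applied to $V = D$ and $k = 2$, this gives $\gr_3^W H^2_c(D) = 0$, hence injectivity of $\gr_3^W H^3_c(Z) \to \gr_3^W H^3_c(X)$.

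For the right vanishing I would first describe $D$ explicitly. Since $\PP^2 \setminus (\CC^\times)^2$ is the union of coordinate lines $L_i = \{x_i = 0\}$, one has $D = \bigcup_{i=0}^{2} D_i$ with $D_i = X \cap ((\CC^\times)^2 \times L_i)$. Substituting $x_i = 0$ into Equation~\eqref{eq:conic-bdl-1} I would check: $D_0$, cut out by $\alpha u_1 x_1^2 = x_2^2$, is isomorphic to $(\CC^\times)^2$ (the locus $[x_1:x_2]=[1:0]$ and $[0:1]$ is excluded since $u_1 \neq 0$); $D_1, D_2$ are smooth affine surfaces realised as double covers of $(\CC^\times)^2$ branched along the discriminants of the residual conics; $D_0 \cap D_i = \emptyset$ for $i = 1, 2$ (the remaining equation forces $[x_0 : x_1 : x_2] = [0 : 0 : 0]$); and $D_1 \cap D_2$ is a smooth affine curve $C \cong \CC^\times$. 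Mayer--Vietoris on $D_1 \cup D_2$, combined with the Deligne weight bound applied to the curve $C$ (which forces $\gr_3^W H^k_c(C) = 0$ for all $k$), would give $\gr_3^W H^3_c(D) = \bigoplus_{i=0}^{2} \gr_3^W H^3_c(D_i)$.

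For each smooth affine surface $D_i$, Poincar\'e duality yields $H^3_c(D_i) \cong H^1(D_i)^\vee(-2)$, hence $\gr_3^W H^3_c(D_i) \cong (\gr_1^W H^1(D_i))^\vee(-2)$; and Deligne's theorem identifies $\gr_1^W H^1(D_i)$ with the image of $H^1(\bar D_i) \to H^1(D_i)$ for any smooth compactification $\bar D_i$. I would exhibit smooth compactifications with $H^1 = 0$: for $D_0$ take $\bar D_0 = (\PP^1)^2$; for $D_1, D_2$ take a resolution of the double cover of $(\PP^1)^2$ branched along the closure of the explicit branch locus, and apply a Lefschetz argument for double covers of rational surfaces with smooth ample branch to conclude $H^1(\bar D_i) = 0$. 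The main obstacle is this last step: making the branch divisors and their resolutions explicit enough in the toric boundary to verify the Lefschetz hypothesis and conclude that $\bar D_1$ and $\bar D_2$ are irregular-free.
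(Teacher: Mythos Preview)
Your strategy coincides with the paper's: both apply the compactly supported long exact sequence for $Z\subset X$ with $D=D_0\cup D_1\cup D_2$, verify the same intersection pattern (indeed $D_0\cap D_i=\varnothing$ and $D_1\cap D_2=(u_1^3+u_2^2=0)\cong\CC^\times$), and reduce everything to the vanishing of $\gr_3^W H^3_c(D_i)$. Your one-line weight bound for $\gr_3^W H^2_c(D)$ is cleaner than the paper's piece-by-piece check, and your Mayer--Vietoris packaging is equivalent to the paper's open/closed sequences.

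The only real divergence is in how the vanishing for $D_1$ and $D_2$ is obtained, and here the paper sidesteps exactly the obstacle you flag. For $D_1$ it does not pass through a double cover of $(\PP^1)^2$ at all: it writes down the explicit smooth compactification
\[
\overline{D}_1=\bigl(u_1^3u_2+u_0u_2^3+u_0^2x_2-\tfrac{1}{\alpha}x_2^2=0\bigr)\subset\PP(1,1,1,2),
\]
a degree-$2$ del Pezzo, hence rational, so $H^1=H^3=0$ with no Lefschetz argument or branch-locus analysis needed. For $D_2$ the paper again avoids compactifying: since $\phi\colon D_2\to(\CC^\times)^2$ is finite, $H^3_c(D_2)=H^3_c\bigl((\CC^\times)^2,\phi_\star\ZZ\bigr)$, and it argues that $\gr_3^W$ of the right-hand side vanishes. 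Your route would also go through --- on $(\PP^1)^2$ every line bundle has vanishing $H^1$, so the normal double cover has $q=0$, and one only needs the singularities over the toric boundary to be rational --- but the paper's ad hoc shortcuts are lighter than carrying out that resolution explicitly.
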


\begin{proof}
Consider the divisor $D=X \setminus Z$. 
There is a long exact sequence of mixed Hodge structures:
\begin{equation}
 \cdots \to H^2_c(D, \ZZ) \to   H^3_c(Z, \ZZ) \to  H^3_c(X, \ZZ) \to  H^3_c(D, \ZZ) \to \cdots
\end{equation}
To prove \eqref{eq:gr3}, we show that $\gr_3^W H^2_c(D, \ZZ)  =\gr_3^W H^3_c(D, \ZZ) =(0)$ . We have that $D=D_0 \cup D_1 \cup D_2$, where $D_i=X \cap (x_i=0)$, $i=0,1,2$, and 
\[ D_0 \cap D_1 =D_0 \cap D_2= \varnothing \quad D_1 \cap D_2 =(u_1^3+u_2^2=0) \subset (\CC^\times)^2
\]
This implies that, for all $i$, $H^i_c(D, \ZZ) =H^i_c(D_0, \ZZ) \oplus H^i_c(D_1 \cup D_2, \ZZ) $.

The surface $D_0$ is given by \[\left( \alpha u_1 x_1^2-x_2^2=0\right) \subset (\CC^\times)^2 \times \PP^1\] Hence $D_0 \simeq (\CC^\times)^2$,  thus $H^2_c(D_0, \ZZ)$ is a pure Hodge structure of weight 0, and  $H^3_c(D_0, \ZZ)$ is a pure Hodge structure of weight $2$. 
It follows that 
\begin{equation}
\gr_3^W H^i_c(D, \ZZ)=\gr_3^W  H^i_c(D_1 \cup D_2, \ZZ) \qquad  i=2,3
\end{equation}

We have a long exact sequence of mixed Hodge structures:
\begin{equation} \label{eq:D1D2} \cdots \to H^{i-1}_c(D_1, \ZZ) \to  H^i_c(D_2\setminus D_1, \ZZ) \to H^i_c(D_1 \cup D_2, \ZZ) \to H^i_c(D_1, \ZZ) \to \cdots
\end{equation}
The surface $D_1$ is the smooth surface given by:
\[ \left((u_1^3u_2+u_2^3)x_0^2+x_0x_2-\frac{1}{\alpha} x_2^2=0 \right) \subset (\CC^\times)^2 \times \PP^1
\] Then a natural compactification of $D_1$ is the smooth degree two del Pezzo surface $\overline{D}_1$ given by 
\[ \left((u_1^3u_2+u_0u_2^3)+u_0^2x_2-\frac{1}{\alpha} x_2^2=0 \right) \subset \PP(1,1,1,2)
\]   Hence $\gr_3^W H^2_c(D_1, \ZZ)  =\gr_3^W H^3_c(D_1, \ZZ) =0$.
The surface $D_2$ is the smooth surface given by:
\[ \left( (u_1^3u_2+u_2^3)x_0^2+u_1u_2 x_0 x_1+u_1 x_1^2=0 \right) \subset (\CC^\times)^2 \times \PP^1
\]
Since $D_2 \setminus D_1$ is nonsingular but noncompact, we have $\gr_3^W   H^2_c(D_1 \setminus  D_2, \ZZ) =(0)$. Then it follows from \eqref{eq:D1D2} that $\gr_3^W   H^2_c(D_1 \cup D_2, \ZZ) =(0)$. On the other hand, since $D_1 \cap D_2$ is a nonsingular and noncompact curve, 
 we have $\gr_3^W   H^3_c(D_2\setminus D_1, \ZZ)= \gr_3^W   H^3_c(D_2, \ZZ)$.
Now, since the projection $\phi \colon D_2 \to (\CC^\times)^2$ to the first factor is  a double (branched) cover of $(\CC^\times)^2$, 
we have that $\gr_3^W   H^3_c(D_2, \ZZ) = \gr_3^W H^3_c((\CC^\times)^2,\phi_\star \ZZ) =(0)$. 
It follows from \eqref{eq:D1D2} that $\gr_3^W   H^3_c(D_1 \cup D_2, \ZZ)=0$. This concludes the proof.
\end{proof}
\smallskip

\paragraph{\textbf{Ramification data of $\pi \colon X \to (\CC^\times)^2$.}} \label{sec:data}
By means of the substitutions
$x_2 \mapsto x_2 +\alpha  x_0/2$, $x_1 \mapsto  x_1 -u_2   x_0/2$,
we rewrite Equation \eqref{eq:conic-bdl-1} as
\begin{equation} \label{eq:conic-bdl-2}
	\left(u_1^3u_2+u_2^3+\frac{\alpha}{4} -\frac{u_1u_2^2}{4}\right)x_0^2 +u_1 x_1^2 -\frac{1}{\alpha}x_2^2=0
\end{equation} 

In matrix notation, we represent \eqref{eq:conic-bdl-2} by the diagonal matrix:
\begin{equation} \label{eq:matrix-conic}
	\left[ \begin{array}{ccc}
u_1^3u_2+u_2^3+\frac{\alpha}{4} -\frac{u_1u_2^2}{4} & 0 & \textcolor{white}{-}0 \\
0 & u_1 & \textcolor{white}{-}0\\ 
0 & 0 & -\frac{1}{\alpha}
\end{array} \right]
\end{equation}
Then the  discriminant of  $\pi \colon X \to (\CC^\times)^2$ is the smooth curve $\Delta \subset (\CC^\times)^2$ defined as:
\begin{equation} \label{eq:Delta}
\Delta= \left(  4 u_1^3u_2+ 4 u_2^3+\alpha -u_1u_2^2=0\right) \subset (\CC^\times)^2
\end{equation} 
The $2:1$ \'etale cover $p \colon \widetilde{\Delta} \to \Delta$ associated to $\pi \colon X \to (\CC^\times)^2$ is specified by the minor $\delta_1=-\frac{1}{\alpha} u_1$ of \eqref{eq:matrix-conic}. Let $u_1, u_2, u_3$ be coordinates on $(\CC^\times)^3$. Then:
\begin{equation}
	\widetilde{\Delta} = \left(\alpha u_3^2-u_1= 4 u_1^3u_2+ 4 u_2^3+\alpha -u_1u_2^2=0 \right) \subset (\CC^\times)^3
\end{equation} with $p$ the projection $(u_1, u_2, u_3) \mapsto (u_1, u_2)$. 
Equivalently, $\widetilde{\Delta}$ is the curve:
\begin{equation} \label{eq:Deltatilde-2}
	\widetilde{\Delta}= \left( 4 \alpha^3 u_1^6u_2+ 4 u_2^3+\alpha -\alpha u_1^2u_2^2=0\right) \subset (\CC^\times)^2
\end{equation}  and $p \colon \widetilde{\Delta} \to \Delta$ is the double cover defined by $u_1, u_2 \mapsto \alpha u_1^2, u_2$. 
Let $M=\ZZ e_1 +\ZZ e_2$ be the character lattice of the torus in \eqref{eq:Delta}
 and let $\widetilde{M}=\ZZ \frac{1}{2}e_1 +\ZZ e_2$ be the character lattice of the torus in \eqref{eq:Deltatilde-2}.
 In Figure \ref{fig:polytope} we draw the Newton polytopes $P, \widetilde{P}$ of the polynomials defining $\Delta$ and $\widetilde{\Delta}$. 

\begin{figure}[ht!]
\definecolor{qqwuqq}{rgb}{0,0.39215686274509803,0}
\definecolor{ccqqqq}{rgb}{0.8,0,0}
\begin{tikzpicture}[line cap=round,line join=round,x=1cm,y=1cm, scale=0.9]
\clip(-7.37,-1.68) rectangle (13.37,3.18);
\draw [line width=1pt] (-2,2)-- (-2,-1);
\draw [line width=1pt] (-2,-1)-- (4,0);
\draw [line width=1pt] (-2,2)-- (4,0);
\draw [->,line width=1.5pt] (-2,-1) -- (-2,0);
\draw [->,line width=1.5pt] (-2,-1) -- (-1,-1);
\begin{scriptsize}
\draw [fill=black] (-2,-1) circle (2.5pt);
\draw[color=black] (-2.19,-1.23) node {$O$};
\draw [fill=black] (4,0) circle (2.5pt);
\draw [fill=black] (-2,2) circle (2.5pt);
\draw [fill=ccqqqq] (0,1) circle (2.5pt);
\draw [fill=ccqqqq] (0,0) circle (2.5pt);
\draw [fill=ccqqqq] (2,0) circle (2.5pt);
\draw [fill=qqwuqq] (-1,0) circle (2.5pt);
\draw [fill=qqwuqq] (1,0) circle (2.5pt);
\draw [fill=qqwuqq] (-1,1) circle (2.5pt);
\draw [fill=qqwuqq] (3,0) circle (2.5pt);
\draw [fill=black] (-2,0) circle (0.5pt);
\draw[color=black] (-2.39,0.10) node {$e_2$};
\draw [fill=black] (-1,-1) circle (0.5pt);
\draw[color=black] (-0.70,-1.09) node {$\frac{1}{2} e_1$};
\end{scriptsize}
\end{tikzpicture}
\caption{\label{fig:polytope} A picture of the Newton polytope $P$ of
  $\Delta$, as well as the Newton polytope $\widetilde{P}$ of
  $\widetilde{\Delta}$.  The two arrows in the picture represent the
  standard lattice basis vectors; the interior lattice points of $P$ are in
  red, and the remaining interior lattice points of $\widetilde{P}$ in
  green. }
\end{figure}
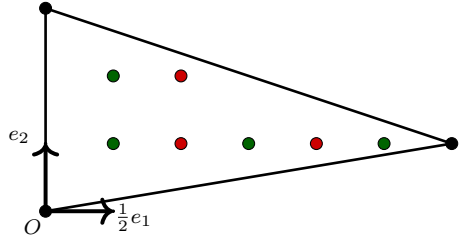


\begin{rem} \label{rem:map-P}
The closure of $\Delta$ in $\PP^2$ is the smooth quartic:
\[ N=\left(4 u_1^3u_2+ 4 u_0 u_2^3+\alpha u_0^4 -u_0u_1u_2^2=0\right) \subset \PP^2
\] where $u_0, u_1, u_2$ are now homogeneous coordinates on $\PP^2$,
and $\Delta=N \setminus \{p_0, \dots, p_4\}$, with $p_0=(0:1:0)$,
$p_i=(u_1= u_2^3+\alpha u_0^3=0)$, $i=1,2,3$, and $p_4=(0:0:1)$, see
Figure \ref{fig:Delta-and-N}. Hence
$\gr^W_1 H^1_c(\Delta, \ZZ)= H^1(N, \ZZ) $ has rank $6$.

The closure of $\widetilde{\Delta}$ in $\PP^2$ is singular at the
point $(0:0:1)$.  Its normalisation is manifestly the genus-$7$ curve
$\widetilde{N} \subset \PP(1,1,3)$ given by
Equation~\eqref{eq:Ntilde}, hence
$\gr^W_1 H^1(\widetilde{\Delta}, \ZZ)= H^1(\widetilde{N}, \ZZ)$ has
rank $14$.
 
We have a commutative diagram:

\begin{equation} \label{eq:diagram2}
\begin{tikzcd}  
\widetilde{\Delta} \ar[d, "p"]   \arrow[hookrightarrow,r]  & \widetilde{N} \ar[d, "P"] \\
 \Delta    \arrow[hookrightarrow,r]   &  N \end{tikzcd}
\end{equation}
where $P \colon \widetilde{N} \to N$ is the quotient by the involution $x_1 \mapsto -x_1$ on $\widetilde{N}$. This is the  map $P$ of Theorem \ref{thm:main}. Note that $P$ has branch divisor $B=p_0+p_1+p_2+p_3$, and that the point $p_0$ is a base point of the linear system $|B|$. 
\end{rem}

\begin{figure}[ht!]
\begin{tikzpicture}[x=0.75pt,y=0.75pt,yscale=-1,xscale=1]

\draw [color={rgb, 255:red, 74; green, 144; blue, 226 }  ,draw opacity=1 ]   (207.5,108) -- (227.5,286) ;
\draw [color={rgb, 255:red, 74; green, 144; blue, 226 }  ,draw opacity=1 ]   (200.5,275) -- (377.5,152) ;
\draw [color={rgb, 255:red, 74; green, 144; blue, 226 }  ,draw opacity=1 ]   (176.5,137) -- (378.5,190) ;
\draw [color={rgb, 255:red, 0; green, 0; blue, 0 }  ,draw opacity=1 ]   (230.5,121) .. controls (233.5,184) and (237.5,185) .. (260.5,149) ;
\draw [color={rgb, 255:red, 0; green, 0; blue, 0 }  ,draw opacity=1 ]   (260.5,149) .. controls (308.33,69) and (259.67,253.67) .. (303.5,260) ;
\draw    (303.5,260) .. controls (350.33,254.33) and (264.33,106.33) .. (471.67,211.67) ;
\draw  [color={rgb, 255:red, 74; green, 144; blue, 226 }  ,draw opacity=1 ][fill={rgb, 255:red, 74; green, 144; blue, 226 }  ,fill opacity=1 ] (335.33,179.83) .. controls (335.33,178.27) and (336.6,177) .. (338.17,177) .. controls (339.73,177) and (341,178.27) .. (341,179.83) .. controls (341,181.4) and (339.73,182.67) .. (338.17,182.67) .. controls (336.6,182.67) and (335.33,181.4) .. (335.33,179.83) -- cycle ;
\draw  [color={rgb, 255:red, 74; green, 144; blue, 226 }  ,draw opacity=1 ][fill={rgb, 255:red, 74; green, 144; blue, 226 }  ,fill opacity=1 ] (252,157.83) .. controls (252,156.27) and (253.27,155) .. (254.83,155) .. controls (256.4,155) and (257.67,156.27) .. (257.67,157.83) .. controls (257.67,159.4) and (256.4,160.67) .. (254.83,160.67) .. controls (253.27,160.67) and (252,159.4) .. (252,157.83) -- cycle ;
\draw  [color={rgb, 255:red, 74; green, 144; blue, 226 }  ,draw opacity=1 ][fill={rgb, 255:red, 74; green, 144; blue, 226 }  ,fill opacity=1 ] (280.67,165.83) .. controls (280.67,164.27) and (281.94,163) .. (283.5,163) .. controls (285.06,163) and (286.33,164.27) .. (286.33,165.83) .. controls (286.33,167.4) and (285.06,168.67) .. (283.5,168.67) .. controls (281.94,168.67) and (280.67,167.4) .. (280.67,165.83) -- cycle ;
\draw  [color={rgb, 255:red, 74; green, 144; blue, 226 }  ,draw opacity=1 ][fill={rgb, 255:red, 74; green, 144; blue, 226 }  ,fill opacity=1 ] (280.17,218.17) .. controls (280.17,216.42) and (281.58,215) .. (283.33,215) .. controls (285.08,215) and (286.5,216.42) .. (286.5,218.17) .. controls (286.5,219.92) and (285.08,221.33) .. (283.33,221.33) .. controls (281.58,221.33) and (280.17,219.92) .. (280.17,218.17) -- cycle ;
\draw  [color={rgb, 255:red, 74; green, 144; blue, 226 }  ,draw opacity=1 ][fill={rgb, 255:red, 74; green, 144; blue, 226 }  ,fill opacity=1 ] (229.33,152) .. controls (229.33,150.34) and (230.68,149) .. (232.33,149) .. controls (233.99,149) and (235.33,150.34) .. (235.33,152) .. controls (235.33,153.66) and (233.99,155) .. (232.33,155) .. controls (230.68,155) and (229.33,153.66) .. (229.33,152) -- cycle ;

\draw (266.67,213.07) node [anchor=north west][inner sep=0.75pt]  [font=\tiny]  {$p_0$};
\draw (217.33,157.73) node [anchor=north west][inner sep=0.75pt]  [font=\tiny]  {$p_1$};
\draw (244.67,143.73) node [anchor=north west][inner sep=0.75pt]  [font=\tiny]  {$p_2$};
\draw (287.33,155.4) node [anchor=north west][inner sep=0.75pt]  [font=\tiny]  {$p_3$};
\draw (332,164.07) node [anchor=north west][inner sep=0.75pt]  [font=\tiny]  {$p_4$};
\end{tikzpicture}
\caption{\label{fig:Delta-and-N} A picture of the smooth quartic $N \subset \PP^2$. The curve $\Delta$ is the complement $N \setminus \{p_0, \dots, p_4\}$. The blue triangle represents the toric boundary of $\PP^2$. 
}
\end{figure}
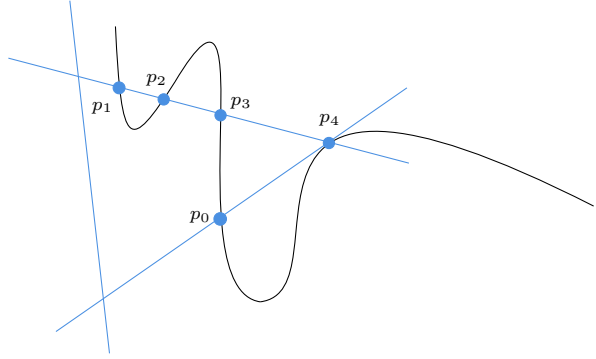

\begin{rem}
  The local system \mbox{$\gr_1^W H^1_c(\Delta_\alpha, \QQ)=H^1(N_\alpha, \QQ)$} is
  the hypergeometric local system associated to 
  $\gamma=(-9,1,3,5)$ --- indeed, $\gamma$ spans the affine relations
  of the monomials in Equations~\eqref{eq:Delta},~\eqref{eq:Deltatilde-2}.
\end{rem}
\smallskip

\begin{lem} \label{lem:leray}
There is an identity of pure Hodge structures:
\begin{equation}
\gr_3^W H^3_c(X, \ZZ) = \gr_3^W H^1_c( (\CC^\times)^2, R^2\pi_\star \ZZ)
\end{equation}
\end{lem}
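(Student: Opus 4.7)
The plan is to exploit the Leray spectral sequence with compact supports for the conic bundle $\pi \colon X \to (\CC^\times)^2$,
\[
E_2^{p,q} = H_c^p\bigl( (\CC^\times)^2, R^q\pi_\star \ZZ_X \bigr) \Longrightarrow H_c^{p+q}(X, \ZZ),
\]
which is available because $\pi$ is proper, and which (rationally) is a spectral sequence of mixed Hodge structures by Saito's theory. From the lemma of Section~\ref{sec:conic-proof} I know $R^0 \pi_\star \ZZ = \ZZ$, $R^1\pi_\star \ZZ = 0$, and $R^q\pi_\star \ZZ = 0$ for $q \geq 3$ since the fibres have dimension $1$. So in total degree $3$ only the rows $q=0$ and $q=2$ contribute, and the Leray filtration on $H^3_c(X,\ZZ)$ reduces to a short exact sequence
\[
0 \to E_\infty^{3,0} \to H^3_c(X, \ZZ) \to E_\infty^{1,2} \to 0.
\]

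Next I would take $\gr_3^W$ and analyse each piece. The term $E_\infty^{3,0}$ is a subquotient of $H^3_c((\CC^\times)^2, \ZZ) = \ZZ(-1)^{\oplus 2}$, which is pure of weight~$2$, so its $\gr_3^W$ vanishes. For $E_\infty^{1,2}$, there are no incoming differentials (the sources $E_r^{1-r, r+1}$ vanish for $r\geq 2$), the differential $d_2 \colon E_2^{1,2} \to E_2^{3,1}$ is zero because $R^1\pi_\star \ZZ = 0$, and the only further potentially nonzero differential is $d_3 \colon E_3^{1,2} \to E_3^{4,0}$. The target is a subquotient of $H^4_c((\CC^\times)^2, \ZZ) = \ZZ(-2)$, pure of weight $4$; since the differentials of the Leray spectral sequence are strict for the weight filtration, $d_3$ vanishes identically on $\gr_3^W E_3^{1,2}$. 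Higher differentials land outside the first quadrant, so
\[
\gr_3^W E_\infty^{1,2} = \gr_3^W E_2^{1,2} = \gr_3^W H^1_c\bigl((\CC^\times)^2, R^2\pi_\star \ZZ\bigr).
\]
Combining this with the short exact sequence above yields the claimed equality of pure Hodge structures.

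The main obstacle, and really the only nontrivial ingredient, is the Hodge-theoretic input: that the compactly supported Leray spectral sequence for the proper map $\pi$ is a spectral sequence of mixed Hodge structures with differentials strict for the weight filtration. This is standard via Saito's theory of mixed Hodge modules applied to $R\pi_\star \QQ$ on the smooth variety $(\CC^\times)^2$, but it must be invoked carefully; once granted, the rest of the argument is a weight bookkeeping exercise driven by the vanishing of $R^1\pi_\star \ZZ$.
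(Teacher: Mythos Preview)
Your proposal is correct and follows essentially the same route as the paper: both run the compactly supported Leray spectral sequence for $\pi$, use $R^1\pi_\star\ZZ=0$ to kill the $d_2$'s, reduce to the short exact sequence $0\to E_\infty^{3,0}\to H^3_c(X,\ZZ)\to E_\infty^{1,2}\to 0$, and then discard the unwanted pieces by the weight argument that $H^3_c((\CC^\times)^2,\ZZ)$ is pure of weight~$2$ while the target $H^4_c((\CC^\times)^2,\ZZ)$ of the only relevant $d_3$ is pure of weight~$4$. The only difference is cosmetic: you make the appeal to Saito's theory and strictness explicit, whereas the paper leaves this implicit in the weight bookkeeping.
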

\begin{proof}
Consider the Leray spectral sequence of  $\pi$ with second page $E_2^{p,q}=H^p_c( (\CC^\times)^2, R^q\pi_\star \ZZ) \Longrightarrow  H_c^{p+q}(X, \ZZ)$. 
Since the second page of the spectral sequence is:
\smallskip

\begin{small}
\[ 
 \begin{array}{ccccc}
   H_c^0((\CC^\times)^2, R^2\pi_\star \ZZ)   &  H_c^1((\CC^\times)^2,  R^2 \pi_\star \ZZ)&  H_c^2((\CC^\times)^2,  R^2 \pi_\star \ZZ) &  H_c^3((\CC^\times)^2,  R^2\pi_\star \ZZ) &   H_c^4((\CC^\times)^2,  R^2\pi_\star \ZZ)\\ (0)  & (0) & (0) & (0) &  (0)
   \\
   H_c^0((\CC^\times)^2, \ZZ)   &  H_c^1((\CC^\times)^2,  \ZZ)&  H_c^2((\CC^\times)^2,\ZZ) &  H_c^3((\CC^\times)^2,  \ZZ) &   H_c^4((\CC^\times)^2,  \ZZ)\end{array} 
\] 
\end{small}
one has that $d_2 \colon E_2^{p,q} \to E_2^{p+2, q-1}$ is zero, thus $E_3^{p,q}= E_2^{p,q}$. 
For reasons of space, the only nonzero $d_3$ 
are:
\[ d_3 \colon H_c^0((\CC^\times)^2,  R^2 \pi_\star \ZZ) \to H_c^3((\CC^\times)^2,  \ZZ) \quad \text{and} \quad d_3 \colon H_c^1((\CC^\times)^2,  R^2 \pi_\star \ZZ) \to H_c^4((\CC^\times)^2,  \ZZ)
\]
 and $d_i=0$ for all $i\geq 4$, thus $E_\infty^{p,q}=E_4^{p,q}$ where
\[\begin{split}
& E_{4}^{0,3}=E_4^{2,1}=0\\
&E_4^{1,2}=\ker\left( d_3 \colon H_c^1((\CC^\times)^2,  R^2 \pi_\star \ZZ) \to H_c^4((\CC^\times)^2,  \ZZ)\right)\\
& E_4^{3,0}=\mathrm{coker} \left(d_3 \colon H_c^0((\CC^\times)^2,  R^2 \pi_\star \ZZ) \to H_c^3((\CC^\times)^2,  \ZZ)\right)
\end{split}
\]
and $d_i=0$ for all $i\geq 4$, vanish, thus $E_4^{p,q}=E_\infty^{p,q}$. 
Then there is a short exact sequence:
\[ 0 \to E_4^{3,0} \to H^3_c(X, \ZZ) \to E_4^{1,2} \to 0\]
Since $H_c^3((\CC^\times)^2,  \ZZ)$ is a pure Hodge structure of weight $2$ and $H_c^4((\CC^\times)^2,  \ZZ)$ is a pure Hodge structure of weight $4$, it follows that
\[ \gr_3^W H^3_c(X, \ZZ) = \gr_3^W E_4^{1,2}   = \gr_3^W   H_c^1((\CC^\times)^2,  R^2 \pi_\star \ZZ)  
\] This proves the statement.
\end{proof}


\begin{lem} \label{lem:implication-our-thm}
As in Section~\ref{sec:conic-proof}, denote by $\mathbb{W}\subset
p_\star \ZZ_{\widetilde{\Delta}}$ the subsheaf of anti-invariants. 

  There is an identity of mixed Hodge structures:
\begin{equation}
H^1_c(\mathbb{W}, \ZZ) = H^1_c( (\CC^\times)^2, R^2\pi_\star \ZZ)(1)	
\end{equation}
\end{lem}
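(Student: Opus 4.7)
\smallskip

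\noindent \textbf{Plan of proof.} The plan is to take the long exact sequence in compactly supported cohomology associated to the short exact sequence of Theorem~\ref{thm:sheaf-conic-bdl}, and observe that the outer terms vanish for purely topological reasons.

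First I would apply Theorem~\ref{thm:sheaf-conic-bdl} to the conic bundle $\pi \colon X \to (\CC^\times)^2$ of Section~\ref{sec:conic-structure}, which gives the short exact sequence of mixed sheaves on $(\CC^\times)^2$:
\begin{equation*}
0 \to i_\star \mathbb{W} \to R^2 \pi_\star \ZZ_X(1) \to \ZZ_{(\CC^\times)^2} \to 0.
\end{equation*}
Since $i \colon \Delta \hookrightarrow (\CC^\times)^2$ is a closed immersion, $H^\bullet_c((\CC^\times)^2, i_\star \mathbb{W}) = H^\bullet_c(\Delta, \mathbb{W})$, which I interpret as the meaning of the notation $H^1_c(\mathbb{W},\ZZ)$ in the statement.

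The associated long exact sequence of mixed Hodge structures reads
\begin{equation*}
\cdots \to H^{k-1}_c((\CC^\times)^2, \ZZ) \to H^k_c(\Delta, \mathbb{W}) \to H^k_c((\CC^\times)^2, R^2\pi_\star \ZZ)(1) \to H^k_c((\CC^\times)^2, \ZZ) \to \cdots
\end{equation*}
By the Künneth formula, $H^i_c((\CC^\times)^2, \ZZ)$ vanishes for $i=0,1$ and equals $\ZZ$ for $i=2$. Substituting these into the long exact sequence in degree $k=1$ yields
\begin{equation*}
0 \to H^1_c(\Delta, \mathbb{W}) \to H^1_c((\CC^\times)^2, R^2\pi_\star \ZZ)(1) \to 0,
\end{equation*}
which gives the claimed isomorphism of mixed Hodge structures.

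\smallskip

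\noindent \textbf{Main obstacle.} There is no substantial obstacle: once Theorem~\ref{thm:sheaf-conic-bdl} is in hand, the argument is a direct computation in the long exact sequence, the only nontrivial input being the vanishing of the low-degree compactly supported cohomology of the two-torus. The only minor point to check is that the maps in the long exact sequence are morphisms of mixed Hodge structures, which follows from the fact that the short exact sequence of sheaves underlies a short exact sequence of mixed Hodge sheaves in the sense of Theorem~\ref{thm:sheaf-conic-bdl}.
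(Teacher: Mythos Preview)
Your proposal is correct and follows exactly the paper's own argument: apply Theorem~\ref{thm:sheaf-conic-bdl}, take the long exact sequence in compactly supported cohomology, and use $H^0_c((\CC^\times)^2,\ZZ)=H^1_c((\CC^\times)^2,\ZZ)=0$ to isolate the degree-$1$ term. The only addition you make beyond the paper is the explicit remark that the maps are morphisms of mixed Hodge structures, which is harmless and already implicit in the paper's phrasing.
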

\begin{proof}
By Theorem \ref{thm:sheaf-conic-bdl}, there is a short exact sequence of mixed sheaves:
\begin{equation}
0 \to i_\star \mathbb{W} \to R^2 \pi_\star \ZZ_X(1) \to \ZZ_{(\CC^\times)^2 } \to 0	
\end{equation}
Applying the functor $R^{\bullet} \Gamma_c((\CC^\times)^2, -)$ to this sequence, we obtain the long exact sequence of compactly supported cohomology groups:
\begin{equation}
\cdots \to 	H^0_c( (\CC^\times)^2, \ZZ) \to H^1_c(\mathbb{W}, \ZZ) \to  H^1_c( (\CC^\times)^2, R^2\pi_\star \ZZ)(1)	 \to H^1_c((\CC^\times)^2, \ZZ) \to \cdots 
\end{equation}
The statement follows from the fact that $H^0_c( (\CC^\times)^2, \ZZ) =H^1_c((\CC^\times)^2, \ZZ)=0$.
\end{proof}

\begin{lem} \label{lem:exact-sequence}
There is an an exact sequence:
\begin{equation} \label{eq:ker-Gysin}
0 \to H^1_c(\Delta, \mathbb{W}) \to H^1_c( \widetilde{\Delta}, \ZZ) \xrightarrow{p_\star} 	H^1_c( {\Delta}, \ZZ)
\end{equation} where $p_\star$ is the Gysin morphism.
\end{lem}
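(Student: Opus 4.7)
The plan is to apply the functor $R^{\bullet}\Gamma_c(\Delta, -)$ to the defining short exact sequence
\[
0 \to \mathbb{W} \to p_\star \ZZ_{\widetilde{\Delta}} \to \ZZ_\Delta \to 0
\]
of \eqref{eq:W} and read off the statement from the resulting long exact sequence, together with the identification $H^i_c(\Delta, p_\star \ZZ_{\widetilde{\Delta}}) = H^i_c(\widetilde{\Delta}, \ZZ)$ and a vanishing of $H^0_c$.

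First, since $p\colon \widetilde{\Delta} \to \Delta$ is a finite (in fact \'etale) double cover, the pushforward $p_\star$ is exact and $R^i p_\star = 0$ for $i>0$. Hence the Leray spectral sequence collapses and gives a natural isomorphism $H^i_c(\Delta, p_\star \ZZ_{\widetilde{\Delta}}) \simeq H^i_c(\widetilde{\Delta}, \ZZ)$ for every $i$, under which the map induced by the trace $p_\star \colon p_\star \ZZ_{\widetilde{\Delta}} \to \ZZ_\Delta$ becomes the Gysin morphism on compactly supported cohomology.

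Applying $R^{\bullet}\Gamma_c(\Delta, -)$ to \eqref{eq:W}, we obtain the long exact sequence
\[
\cdots \to H^0_c(\widetilde{\Delta}, \ZZ) \xrightarrow{p_\star} H^0_c(\Delta, \ZZ) \to H^1_c(\Delta, \mathbb{W}) \to H^1_c(\widetilde{\Delta}, \ZZ) \xrightarrow{p_\star} H^1_c(\Delta, \ZZ) \to \cdots
\]
By Remark~\ref{rem:map-P}, $\Delta$ is the smooth quartic $N$ minus the five points $p_0,\dots,p_4$, hence $\Delta$ is a nonempty noncompact smooth curve; similarly $\widetilde{\Delta}$ is the noncompact curve obtained from $\widetilde{N}$ by removing the preimage of these points (together with normalising at the singular point). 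In particular neither curve has any compact connected component, so $H^0_c(\Delta, \ZZ) = H^0_c(\widetilde{\Delta}, \ZZ) = 0$. Injectivity of $H^1_c(\Delta, \mathbb{W}) \to H^1_c(\widetilde{\Delta}, \ZZ)$ and exactness at $H^1_c(\widetilde{\Delta}, \ZZ)$ then follow immediately from the long exact sequence.

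Since every step is formal once one knows that $p$ is finite and that both curves are noncompact, there is no genuine obstacle; the only point worth checking carefully is the identification of the connecting map with the Gysin morphism, which is standard for finite covers.
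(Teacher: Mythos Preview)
Your proof is correct and follows essentially the same route as the paper: apply $R^{\bullet}\Gamma_c(\Delta,-)$ to the short exact sequence~\eqref{eq:W} and use that $H^0_c(\Delta,\ZZ)=0$ because $\Delta$ is a noncompact curve. The paper's version is terser (it only records $H^0_c(\Delta,\ZZ)\simeq H_2(\Delta,\ZZ)=0$, which is all that is needed), while you additionally spell out the identification $H^i_c(\Delta,p_\star\ZZ_{\widetilde{\Delta}})\simeq H^i_c(\widetilde{\Delta},\ZZ)$ and the vanishing of $H^0_c(\widetilde{\Delta},\ZZ)$; the latter is unnecessary but harmless.
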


\begin{proof}
Apply the functor $R^{\bullet} \Gamma_c(\Delta, -)$ to the the short exact sequence \eqref{eq:W} defining  $\mathbb{W}$ and observe that $H^0_c(\Delta, \ZZ) \simeq H_2(\Delta, \ZZ)=0$.
\end{proof}

\begin{rem} \label{rem:over-Q}
Note that the Gysin morphism $p_\star$ in \eqref{eq:ker-Gysin} is not surjective but has cokernel $\ZZ/2\ZZ$. Over $\QQ$ one has the short exact sequence:
\[ 0 \to H^1_c(\Delta, \mathbb{W}_\QQ) \to H^1_c( \widetilde{\Delta}, \QQ) \xrightarrow{p_\star} 	H^1_c( {\Delta}, \QQ) \to 0
\] where $\mathbb{W}_\QQ = \mathbb{W} \otimes_\ZZ \QQ $.
\end{rem}

\paragraph{\textbf{Proof of Theorem \ref{thm:main}.}}
Applying in sequence Lemma \ref{lem:gr3}, Lemma \ref{lem:leray}, Lemma \ref{lem:implication-our-thm}, one sees that:
\[ \gr_3^W H^3_c(Z, \ZZ)	= \gr_3^W H^3_c(X, \ZZ)= \gr_3^W H^1_c( (\CC^\times)^2, R^2\pi_\star \ZZ)	= \left(\gr_1^W H^1_c(\Delta, \mathbb{W})\right)(-1)
\]
Then the statement follows from  Proposition \ref{pro:our-claim} together with Lemma \ref{lem:exact-sequence}, Remark \ref{rem:map-P}, and Remark \ref{rem:over-Q}. \qed


\appendix 
\section{Proof of Proposition \ref{pro:our-claim}}
\label{sec:appendix}

The proof of Proposition~\ref{pro:our-claim} uses the theory of
GKZ systems: we summarise what we need in the next section.

\subsection{GKZ systems}
\label{sec:gkz}


Let $N$ and $n$ be two positive integers.  Let $A=\{a_1, \dots a_N\}$
be a set of $N$ vectors in $\ZZ^n$, and let
$\overline{A}= \{\overline{a}_1, \dots, \overline{a}_N \} \subset
\ZZ^{n+1}$ where $\overline{a}_i=(1, a_i)$.  Let $\beta$ be a vector
in $\CC^{n+1}$. Assume that the vectors $\overline{a}_i$ generate a
rank-$(n+1)$ sublattice of $\ZZ^{n+1}$.

Let $\CC^N$ be the affine space with coordinates $v_1, \dots,
v_N$. Let $\mathcal{D}_{\CC^N}$ be the sheaf of polynomial linear
partial differential operators on $\CC^N$. We simply write
$\mathcal{D}$ whenever the context allows it.  For all
$i \in \{1, \dots, N\}$ we write $\delta_i=\delta_{v_i}$,
$D_i=v_i \delta_i$.
 
 \begin{dfn}The \emph{GKZ system} with parameters
   $(A,\beta)$ is the differential
   system: \begin{equation} \label{eq:gkz-general} M(A, \beta)=
     \left\{ \begin{array}{ll}
               \sum_{ j=1}^N \overline{a}_j D_i -\beta  \\
               \textcolor{white}{--}\\
 
\prod_{l_i>0} (\delta_i)^{l_i} - \prod_{l_i<0}  (\delta_i)^{-l_i}	  & \text{for} \quad l \in \LL
\end{array} \right.
\end{equation}
where $\LL$ is the lattice of integral relations among the $\overline{a}_i$:
\begin{equation}\LL =\{ 
l=(l_1, \dots, l_n) \in \ZZ^N \colon  \sum_{i=1}^N  l_i \overline{a}_i=0 
\}
\end{equation}
Note that $\LL$ is a lattice of rank $N-n-1$. 
\end{dfn}

\begin{dfn}The {GKZ $\mathcal{D}$-module} with parameters $(A,\beta)$ is the quotient: 
\begin{equation} \mathcal{M}(A, \beta) =\mathcal{D}/(M(A, \beta)) 
\end{equation}
where $ (M(A, \beta))$ denotes the left ideal generated by the partial differential operators of $M(A, \beta)$. 
\end{dfn}
\medskip

Let $\TT=\Spec \CC[\ZZ^n]\simeq (\CC^\times)^n$
and let $u_1, \dots, u_n$ be coordinates on $\TT$. Write $u=(u_1, \dots, u_n)$. 
Every Laurent polynomial $f \in \CC[u_1^{\pm 1}, \dots, u_n^{\pm 1}]$ defines an affine hypersurface 
\begin{equation} Z_{f}=(f=0) \subset \TT
\end{equation} 
Observe that the complement $\TT \setminus Z_{f}$ is isomorphic to the affine hypersurface $Z_{F}=(F=0) \subset \overline{\TT}$ where $\overline{\TT}=\Spec \CC[\ZZ^{n+1}]$ is the $(n+1)$-dimensional torus with coordinates $u_0, u_1, \dots, u_n$, and $F(u_0,u)=x_0 f(u)-1$. 
\smallskip

Now let $A=\{a_1, \dots, a_N\}$ as above. 	 
To every point $v=(v_1, \dots, v_N) \in \CC^N$ one can associate the Laurent polynomial:
\begin{equation} \label{eq:f-v}
f_v(u)=\sum_{j=1}^N v_j u^{a_j}	 \in \CC[u_1^{\pm 1}, \dots, u_n^{\pm 1}]
\end{equation} where $u^m=u_1^{m_1} u_2^{m_2 }\dots u_n^{m_n}$ for $m=(m_1, \dots, m_n) \in \ZZ^n$, the affine hypersurface $Z_{f_v} \subset \TT$, 
and the affine hypersurface $Z_{F_v} \subset \overline{\TT}$.
 We simply write $f, Z_f, F, Z_F$  whenever the the subscript  is clear from the context. 

 The following result relates the $\mathcal{D}$-module
 $\mathcal{M}(A, \underline{0})$ and the Laurent polynomials $f_v$.
 
\begin{thm}{\cite[Theorem 8]{Stienstra}} \label{thm:Stienstra}
Let $A$ be as above. Assume that the following two conditions hold:

\begin{enumerate}[1.]
\item  the vectors $\overline{a}_i=(1,a_i)$ generate  $\ZZ^{n+1}$;

	\item $A=\Delta \cap \ZZ^n$ for some $n$-dimensional lattice polyhedron $\Delta$. 
	 \end{enumerate}  
 
 Let $E_A=E_A(v_1, \dots, v_N)$ be the principal $A$-determinant and
 let $U=\CC^N\setminus \{E_A=0\}$. Then the local system of flat sections
 of $\mathcal{M}(A, \underline{0})_{|U}$ is the variation of
 relative cohomology $H^{n+1}(\overline{\TT} , Z_{F_v}, \QQ)$ on $U$.
\end{thm}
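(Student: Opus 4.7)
The plan is to produce an explicit isomorphism of local systems on $U$ via period integrals, then finish by rank comparison. For $v\in U$ the principal $A$-determinant does not vanish, so $Z_{F_v}\subset\overline{\TT}$ is smooth and the relative cohomology $H^{n+1}(\overline{\TT},Z_{F_v},\QQ)$ assembles into a local system $\VV$ on $U$. By the tube/residue isomorphism $H^{n+1}(\overline{\TT},Z_{F_v},\QQ)\simeq H^{n+1}(\overline{\TT}\setminus Z_{F_v},\QQ)$, it is dual to the local system of cycles in the complement, and for $\gamma\in H_{n+1}(\overline{\TT}\setminus Z_{F_v},\QQ)$ I would set
$$
\phi_\gamma(v)=\int_\gamma\frac{du_0\wedge du_1/u_1\wedge\cdots\wedge du_n/u_n}{F_v(u_0,u)}.
$$
This is a multivalued holomorphic function on $U$, and the assignment $\gamma\mapsto\phi_\gamma$ defines a morphism of local systems $\Phi$ from $\VV^\vee$ to the sheaf of flat sections of $\mathcal{M}(A,\underline 0)$.

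The next step is to check that $\Phi$ lands in the solution sheaf. Differentiating under the integral sign, $\partial/\partial v_j$ brings down a factor $-u^{a_j}/F_v^2$, and iterating gives $\partial^l\phi_\gamma$ as an integral of $u^{\sum l_i a_i}$ times a universal rational expression in $F_v$ and its derivatives. Every relation $l\in\LL$ satisfies $\sum l_i\overline{a}_i=0$, hence both $\sum l_i a_i=0$ and $\sum l_i=0$, and so $\prod_{l_i>0}\partial_i^{l_i}\phi_\gamma$ and $\prod_{l_i<0}\partial_i^{-l_i}\phi_\gamma$ agree integrand by integrand. The Euler equations $\sum_j\overline{a}_jD_j\phi_\gamma=0$ follow from invariance of the integrand under the torus rescaling $u\mapsto\lambda u$ paired with $v_j\mapsto\lambda^{-a_j}v_j$.

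It remains to see that $\Phi$ is an isomorphism, and for this the strategy is to match ranks and then lift nonvanishing at one point to an isomorphism of variations. By Adolphson's theorem the holonomic rank of $\mathcal{M}(A,\underline 0)|_U$ equals $n!\,\mathrm{vol}(\mathrm{conv}\,A)$ --- this is exactly where hypothesis (2) enters, because via Hochster's theorem it makes the toric ring $\CC[\NN\overline{A}]$ Cohen--Macaulay and rules out the rank jumps of Matusevich--Miller--Walther. On the topological side, the Euler characteristic formula of Danilov--Khovanskii for smooth hypersurfaces in tori, fed into the long exact sequence of the pair $(\overline{\TT},Z_{F_v})$, computes $\rk\VV$ to be the same integer.

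The main obstacle is precisely the last step: producing enough cycles to see surjectivity of the period map. The cleanest route is to work near a point of maximal degeneration attached to a regular triangulation of $\mathrm{conv}\,A$. There the combinatorial cycles produced by the triangulation --- essentially vanishing cycles around the nodes of the degenerate fibre --- have period integrals whose leading asymptotics are Gel'fand $\Gamma$-series, and these match term by term with the standard $\Gamma$-series solutions of $M(A,\underline 0)$. That matching of $\Gamma$-series with period expansions is the technical heart of the proof and the point where the combinatorial input of the hypothesis does its real work.
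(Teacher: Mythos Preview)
The paper does not prove this statement: Theorem~\ref{thm:Stienstra} is quoted verbatim as \cite[Theorem 8]{Stienstra} and used as a black box in the Appendix. There is therefore no ``paper's own proof'' to compare against; the paper only adds two remarks after the statement --- one noting that Conditions~1 and~2 are genuinely needed, and one (Remark~\ref{rem:sequences}) recording the exact sequences and dimension counts that will be used downstream.

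Your sketch is a plausible outline of the classical period-integral approach to results of this type, and you correctly identify where Condition~(2) enters (Cohen--Macaulayness of the semigroup ring, hence no rank jump at $\beta=\underline{0}$). Two cautions if you were to flesh this out. First, the claimed isomorphism $H^{n+1}(\overline{\TT},Z_{F_v},\QQ)\simeq H^{n+1}(\overline{\TT}\setminus Z_{F_v},\QQ)$ is not a tube/residue identity on the nose; the relation between the relative cohomology of the pair and the cohomology of the complement goes through the long exact sequence of the pair together with a Gysin/residue map, and you need to track the torus cohomology terms (compare Remark~\ref{rem:sequences}). Second, your final paragraph defers the real content --- matching $\Gamma$-series with period expansions near a maximal degeneration point --- to an unspecified argument; this is indeed the substantive step in Stienstra's proof, and without it the rank equality alone does not give injectivity of the period map $\Phi$, so the isomorphism is not yet established.
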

	
\begin{rem}
  Conditions 1 and 2 are necessary for Theorem
  \ref{thm:Stienstra} to hold, see \cite[Part IIB]{Stienstra}. To
  prove Proposition \ref{pro:our-claim}, we will need the conclusion
  of Theorem~\ref{thm:Stienstra} to hold for a GKZ system that does not satisfy
  Condition 2.  \end{rem}

\begin{rem}
  \label{rem:sequences}
  If the Laurent polynomial $f$ is $\Delta$-regular~\cite[Definition
  3.3]{Batyrev} there is an exact sequence of mixed Hodge structures:
  \begin{equation} \label{eq:exact-sequence-rel} 0 \to
    H^n(\overline{\TT}, \QQ) \to H^n(Z_F, \QQ) \to
    H^{n+1}(\overline{\TT} , Z_F, \QQ) \to H^{n+1}(\overline{\TT},
    \QQ) \to 0
\end{equation}
see \cite[Equation (55)]{Stienstra}. 
This sequence  
splits into the two short exact sequences:
		\begin{equation} \label{eq:def-primitive}
		0 \to H^n(\overline{\TT}, \QQ) \to H^n(Z_F, \QQ)  \to PH^n(Z_F, \QQ) \to 0
	\end{equation}
	\begin{equation} \label{eq:relevant-ses}
		0 \to PH^n(Z_F, \QQ) \to H^{n+1}(\overline{\TT} , Z_F, \QQ) \to H^{n+1}(\overline{\TT}, \QQ)  \to 0
\end{equation}
where the symbol $PH^{\bullet}$ denotes primitive cohomology. 

One has that $H^n( \overline{\TT}, \QQ)= \QQ^{n+1}(-n)$ and
\mbox{$H^{n+1} (\overline{\TT}, \QQ) \simeq \QQ(-(n+1))$}.  By
\cite[Proposition 5.2]{Batyrev},
$\dim H^n(Z_F, \QQ)=\mathrm{vol}(\Delta)+n$, thus
$\dim PH^n(Z_F, \QQ)=\mathrm{vol}(\Delta)-1$ and
$\dim H^{n+1}(\overline{\TT} , Z_F, \QQ)=\mathrm{vol}(\Delta)$.  Note
that, by \eqref{eq:relevant-ses},
\[ W_i PH^n(Z_F, \QQ) =W_i H^{n+1}(\overline{\TT}, Z_F, \QQ) \qquad \text{for all} \ i \leq 2n+1
\]  
Moreover, the residue map
$\mathrm{Res} \colon H^{n}(Z_F, \QQ) \twoheadrightarrow H^{n-1}(Z_f,
\QQ)(-1)$ induces an isomorphism
$ PH^n(Z_F, \QQ) \simeq PH^{n-1}(Z_f, \QQ)(-1) $, see \cite[Section
5]{Batyrev}.

\end{rem}


\subsection{Proof of Proposition \ref{pro:our-claim}}
 We follow the same setup and notation of Section \ref{sec:gkz}.
 
\subsubsection*{Preliminary lemmas}
Consider the order-$19$ reducible hypergeometric operator
$\widetilde{H}$ on $\CC^\times$:\footnote{The hypergeometric operator
  $\widetilde{H}$ can be computed from the coefficients of the
  function $F$ via the two-term recursion
  relation
  , see \cite[Section 5.3]{NK}.}
\begin{equation} \label{eq:reducible} \widetilde{H}= \alpha_0
  D\left(D-\frac{1}{2}\right)
  \prod_{i=0}^{2}\left(D-\frac{i}{3}\right)
  \prod_{i=0}^{4}\left(D-\frac{i}{5}\right)
  \prod_{i=0}^{8}\left(D-\frac{i}{9}\right) - \alpha
  (D+1)\prod_{i=1}^{18}\left(D+\frac{i}{18}\right)
\end{equation}  
where $D=\alpha d/d \alpha$ as in Section \ref{sec:initial-set-up}.
Since $\alpha(D+1+\delta)=(D+\delta)\alpha$ for all $\delta \in \CC$,
we have $\widetilde{H}=G \cdot H$, where $H$ is the operator in
Equation \eqref{eq:irreducible} and
\[G= D\left(D-\frac{1}{2}\right)
  \prod_{i=0}^{8}\left(D-\frac{i}{9}\right)
\]

\smallskip

Let $\Delta$ be the convex hull of the vectors $m_i$, $i=1, \dots, 6$
in Equation \eqref{eq:our-choice-mi}. The lattice points of $\Delta$
are given by the $m_i$ and the vector $(0,1,0,1)$. Let $a_i=m_i$,
$i=1, \dots, 6$, and write $A^\prime=\{a_1, \dots, a_6\}$.

\begin{lem} \label{lem:Aprime-and-Hred} Let $k_i$ be integers such
  that $\sum_{i=1}^6 k_i \gamma_i=1$ and consider the morphism
  \[
    \CC^\times \overset{j}{\hookrightarrow} \CC^6
\quad 
\text{given by}
\quad
 v_i \mapsto (-\alpha)^{k_i}
  \]


  Then there is an isomorphism of
  $\mathcal{D}=\mathcal{D}_{\CC^\times}$-modules:
\begin{equation}
\mathcal{D}/ \mathcal{D} \widetilde{H} = j^\star \mathcal{M}(A^\prime, \underline{0})
\end{equation}
\end{lem}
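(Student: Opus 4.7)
The plan is to compute the annihilator ideal in $\mathcal{D}_{\CC^\times}$ of a cyclic generator of $j^\star \mathcal{M}(A', \underline{0})$ and identify it with $\mathcal{D}\widetilde{H}$.

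First I would write down the generators of the GKZ ideal $M(A', \underline{0})$ explicitly. Because the lattice $\LL$ of integral relations among the augmented vectors $\overline{a}_1,\dots,\overline{a}_6 \in \ZZ^5$ has rank $6-5=1$ and is spanned by $\gamma = (-18,-1,2,3,5,9)$, the ideal is generated by the five toric operators $\sum_{j=1}^{6}\overline{a}_{j,r} D_j$ ($r=1,\dots,5$) together with a single box operator $\square_\gamma = \partial_3^2 \partial_4^3 \partial_5^5 \partial_6^9 - \partial_1^{18}\partial_2$ associated to $\gamma$.

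Next I would analyse how these generators act on the canonical cyclic vector $1 \otimes 1 \in j^\star \mathcal{M}(A',\underline{0})$. The chain rule applied to $v_i = (-\alpha)^{k_i}$ yields $D(1\otimes 1) = \sum_i k_i (1\otimes D_i \cdot 1)$ in the pullback, while the toric relations force the tuple $(1\otimes D_j \cdot 1)_j$ to lie in the one-dimensional kernel $\ZZ\cdot\gamma$ of the toric matrix. Combined with the normalisation $\sum_i k_i \gamma_i = 1$, this pins down $1\otimes D_j\cdot 1 = \gamma_j\, D(1\otimes 1)$ for each $j$. Iterating -- using that the toric operators commute with each $D_j$ -- one obtains the substitution rule $1\otimes P(D_1,\dots,D_6)\cdot 1 = P(\gamma_1 D,\dots,\gamma_6 D)(1\otimes 1)$ for every polynomial~$P$.

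The core of the argument is the pullback of the box relation. Using the operator identity $v_i^a \partial_i^a = \prod_{j=0}^{a-1}(D_i - j)$, one rewrites $\square_\gamma \cdot 1 = 0$ in $\mathcal{M}(A',\underline{0})$ as an equality between two expressions of the form (monomial in the $v_i$)$\,\cdot\,$(polynomial in the $D_i$), applied to $1$. Pulling back via $j$, the $v$-monomials become powers of $\alpha$ (with exponents determined by $\sum_i k_i\gamma_i = 1$), the polynomials in the $D_i$ become polynomials in $D$ via the substitution rule above, and after commuting $\alpha$-factors past $D$-polynomials using $\alpha\,f(D) = f(D-1)\,\alpha$ -- together with the arithmetic identity $\alpha_0\cdot 18^{18} = 2^2\cdot 3^3\cdot 5^5\cdot 9^9$, immediate from $\alpha_0 = 5^5/(3^{15}2^{16})$ -- one recovers, up to a left unit, the operator $\widetilde{H}$ as an annihilator of $1 \otimes 1$.

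Finally, both $\mathcal{D}/\mathcal{D}\widetilde{H}$ and $j^\star \mathcal{M}(A',\underline{0})$ are cyclic holonomic $\mathcal{D}$-modules; the map sending $1 \mapsto 1 \otimes 1$ is surjective by construction, and a comparison of generic ranks -- both equal to the order $19$ of $\widetilde{H}$ -- promotes this to an isomorphism. The rank of the pullback can be accessed by noting that $j$ avoids the principal $A$-determinant locus of the GKZ system for $\alpha \neq \alpha_0$.

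The main obstacle is the precise bookkeeping in the pullback of the box relation: one must carefully track the alternation between the $v$-monomials (which become powers of $\alpha$, introducing Bernstein--Sato shifts $D\mapsto D-k$ when commuted past $f(D)$) and the application of the $D_i$, so as to recover exactly the operator $\widetilde{H}$ and not a gauge-equivalent shifted version. A secondary technical point is the rank comparison in the last step, for which one uses flatness of the GKZ system on its smooth locus.
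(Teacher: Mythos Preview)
Your proposal is correct and follows essentially the same computation as the paper's sketch: both identify the rank-one relation lattice $\LL=\ZZ\gamma$, write the GKZ ideal as five toric operators plus the single box operator $\partial_3^2\partial_4^3\partial_5^5\partial_6^9-\partial_1^{18}\partial_2$, and reduce the box relation to a one-variable hypergeometric ODE via the substitution $D_i\mapsto\gamma_i D$ coming from the toric homogeneity.

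The only notable difference is one of framing. The paper argues dually, on the level of solutions: a function $\Phi(v_1,\dots,v_6)$ annihilated by the toric operators must have the form $G\bigl(\prod_i v_i^{\gamma_i}\bigr)$, and the box operator then forces $G$ to satisfy the hypergeometric ODE; the final identification with $\widetilde{H}$ is outsourced to \cite[Lemma~3.3]{NK}. You instead work directly inside the $\mathcal{D}$-module with the cyclic vector $1\otimes 1$, derive the substitution rule $1\otimes P(D_1,\dots,D_6)\cdot 1=P(\gamma_1 D,\dots,\gamma_6 D)(1\otimes 1)$ from the toric relations, and close the argument with a generic-rank comparison. Your version is slightly more self-contained (it does not need the external lemma), at the cost of the bookkeeping you flag in your last paragraph; the paper's solution-theoretic phrasing is shorter but leans on the cited reference for the last step.
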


\begin{proof}[Sketch of Proof] The lattice $\LL$ of integral relations among $\overline{a}_1, \dots, \overline{a}_6$ is generated by $\gamma=(-18,-1,2,3,5,9)$. Thus
the GKZ system $M(A^\prime, \underline{0})$ is the system of partial differential equations:
\begin{equation} \label{eq:Mprime}
M(A^\prime, \underline{0})=\left\{ \begin{array}{ll}
 \sum_{i=1}^6 D_i=0\\
 	 D_1+3D_4+D_6=0\\
 	D_1+D_4+3D_5=0 \\
 	D_1+2D_6=0 \\
  2D_2+D_3=0 \\
 	\delta_3^2  \delta_4^3  \delta_5^5  \delta_6^9 - \delta_1^{18} \delta_2=0
\end{array} \right.
\end{equation}
The key point is the following. A function $\Phi (v_1, \dots, v_6)$ satisfies
the first five equations of the system if and only if $\Phi(v_1, \dots, v_6)= G(\prod_{i=1}^6
v_i^{\gamma_i})$ for some function of one variable $G$; then the last equation of the system is satisfied if and only
if $G$ satisfies the hypergeometric ODE:
\[
2D(2D-1)  3D (3D-1)(3D-2) \; 5D \cdots (5D-4)	\; 9D \cdots
(9D-8) - (- \alpha) \cdot  18D \cdots (18D+17)D=0 \]
It follows that $\Phi(j(\alpha))=G(-\alpha)$ is solution to the hypergeometric operator:
\[
L=2D(2D-1)  3D (3D-1)(3D-2) \; 5D \cdots (5D-4)	\; 9D \cdots
(9D-8) - \alpha\cdot  18D \cdots (18D+17)D
\]
and by \cite[Lemma 3.3]{NK} the $\mathcal{D}$-module $\mathcal{D}/\mathcal{D} L$
 is isomorphic to the $\mathcal{D}$-module
$\mathcal{D}/\mathcal{D} \widetilde{H}$. 
\end{proof}

\begin{rem}
  This is why GKZ $\mathcal{D}$-modules are called `hypergeometric'.
\end{rem}

\smallskip

To $(v_1, \dots, v_6) \in \CC^6$ associate the Laurent polynomial 
$f_{(v_1, \dots, v_6)}= \sum_{j=1}^6 v_j u^{a_j}$ as in \eqref{eq:f-v}, 
the hypersurface $Z_{f_{(v_1, \dots, v_6)}} \subset \TT$,  and the hypersurface   $Z_{F_{(v_1, \dots, v_6)}} \subset \overline{\TT}$.

\begin{lem} \label{lem:Stienstra-for-Aprime}
  Let $U^\prime = \CC^6 \setminus \{E_A=0\}$.
  The local system of flat sections of $\mathcal{M}(A^\prime,
    \underline{0})_{|U^\prime}$ is  isomorphic to $H^5(\overline{\TT},
    Z_{F_{(v_1, \dots, v_6)}}, \QQ)$ on $U^\prime$. 
  \end{lem}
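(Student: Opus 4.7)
\medskip

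The plan is to reduce to Theorem~\ref{thm:Stienstra} by enlarging $A'$ so that Condition~2 holds, and then restricting the resulting identification along the hyperplane cut out by the extra coordinate. Let $a_7 = (0,1,0,1) \in \ZZ^4$ be the unique lattice point of $\Delta$ not contained in $A'$, and set $A = A' \cup \{a_7\} = \Delta \cap \ZZ^4$. Then $A$ satisfies both Conditions~1 and~2 of Theorem~\ref{thm:Stienstra}, so on $\widetilde{U} = \CC^7 \setminus \{E_A = 0\}$ the local system of flat sections of $\mathcal{M}(A,\underline{0})|_{\widetilde{U}}$ is identified with the variation $H^5(\overline{\TT}, Z_{F_{(v_1,\dots,v_7)}}, \QQ)$.

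Next, consider the closed embedding $\iota \colon \CC^6 \hookrightarrow \CC^7$ defined by $(v_1,\dots,v_6) \mapsto (v_1,\dots,v_6,0)$. By the definition of $U'$ in the statement one has $\iota^{-1}(\widetilde{U}) = U'$, and setting $v_7 = 0$ simply suppresses the monomial $u^{a_7}$, so $Z_{F_{\iota(v)}} = Z_{F_{(v_1,\dots,v_6)}}$ for $v \in U'$. Consequently, pulling back Stienstra's identification along $\iota$ produces a canonical isomorphism between the local system of flat sections of $\iota^\star \mathcal{M}(A,\underline{0})|_{U'}$ and the variation $H^5(\overline{\TT}, Z_{F_{(v_1,\dots,v_6)}}, \QQ)$ over $U'$.

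It then remains to identify $\iota^\star \mathcal{M}(A,\underline{0})|_{U'}$ with $\mathcal{M}(A',\underline{0})|_{U'}$. The lattice inclusion $\LL_{A'} \subset \LL_A$ shows that every box equation of $M(A',\underline{0})$ is already a box equation of $M(A,\underline{0})$; meanwhile, the Euler equations of $M(A,\underline{0})$ specialise along $v_7 = 0$ to those of $M(A',\underline{0})$ because $D_7 = v_7 \partial_7$ vanishes on analytic germs at the hyperplane. Together these facts provide a canonical map from the local system of flat sections of $\mathcal{M}(A',\underline{0})|_{U'}$ to that of $\iota^\star \mathcal{M}(A,\underline{0})|_{U'}$, which composed with the previous step yields the desired arrow.

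The main obstacle I foresee is showing this arrow is an isomorphism, equivalently that the two local systems share the same generic rank. On the cohomology side \cite[Proposition 5.2]{Batyrev} gives rank $\mathrm{vol}(\Delta)$, which is an immediate lower bound for the rank of $\mathcal{M}(A',\underline{0})|_{U'}$. For the matching upper bound one cannot invoke the saturated normalised-volume formula directly since $A'$ is not saturated in $\Delta$. My approach would be to exploit Lemma~\ref{lem:Aprime-and-Hred}: after pulling back along the one-parameter slice $j \colon \CC^\times \hookrightarrow \CC^6$, the GKZ $\mathcal{D}$-module becomes $\mathcal{D}/\mathcal{D}\widetilde{H}$, whose rank equals the order of $\widetilde{H}$; one then checks that this order matches $\mathrm{vol}(\Delta)$ and propagates the equality from the slice to all of $U'$ by the fact that the rank of a $\mathcal{D}$-module local system is locally constant on the smooth locus.
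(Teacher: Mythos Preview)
Your global strategy---enlarge $A'$ to $A=\Delta\cap\ZZ^4$ by adding $a_7=(0,1,0,1)$, apply Theorem~\ref{thm:Stienstra} to $A$, and restrict along the hyperplane $v_7=0$---is exactly what the paper does. The only substantive step is to identify $\iota^\star\mathcal{M}(A,\underline{0})$ with $\mathcal{M}(A',\underline{0})$, and here the paper simply invokes \cite[Theorem~2.2]{FF} together with a \texttt{Macaulay2} verification, rather than arguing directly.

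Your direct argument for this identification has a genuine gap. First, the natural map runs the other way: restricting a solution $\Phi(v_1,\dots,v_7)$ of $M(A,\underline{0})$ to $v_7=0$ yields a solution of $M(A',\underline{0})$, so the solution map goes $\text{Sol}(\iota^\star\mathcal{M}(A,\underline{0}))\to\text{Sol}(\mathcal{M}(A',\underline{0}))$, not the direction you state. More importantly, even granting a map in some direction and equal ranks, this does not force an isomorphism: a morphism between local systems of the same rank can have nontrivial kernel and cokernel. Concretely, a solution of $M(A,\underline{0})$ could vanish identically on $v_7=0$ without being zero, so the restriction map on solutions need not be injective; and in the $\mathcal{D}$-module direction, $\iota^\star\mathcal{M}(A,\underline{0})$ is a quotient of $\mathcal{D}_{\CC^7}/I_A$ by $v_7$, which as a $\mathcal{D}_{\CC^6}$-module is generated by the images of $\partial_7^k\cdot 1$ for all $k\geq 0$ and hence need not be cyclic, so the induced map from $\mathcal{M}(A',\underline{0})$ need not be surjective. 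Your rank computation via Lemma~\ref{lem:Aprime-and-Hred} is correct and useful (indeed it confirms that $\underline{0}$ is not a rank-jumping parameter, which is part of the hypothesis in \cite{FF}), but it does not by itself close this gap. To finish you would need an explicit restriction theorem of the type proved in \cite{FF}, or a direct computation that the restriction ideal coincides with the $A'$-GKZ ideal.
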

  
\begin{proof}
Let $a_7=(0,1,0,1)$ and write $A=\{a_1, \dots, a_7\}$. 
Since the GKZ system $M(A, \underline{0})$ satisfies the two
conditions of Theorem \ref{thm:Stienstra}, letting $U=\CC^7\setminus
\{E_A=0\}$, the local system of flat sections of $\mathcal{M}(A, \underline{0})_{|U}$ is
isomorphic to the variation
$H^5(\overline{\TT}, Z_{F_{(v_1, \dots, v_7)}},
\QQ)$ on $U$.
Let \[ \iota \colon \CC^6 = (v_7=0) \hookrightarrow \CC^7
\] be the inclusion. 
It is clear that the restriction of the variation
$H^5(\overline{\TT}, Z_{F_{(v_1, \dots, v_7)}}, \QQ)$ to the
hyperplane \mbox{$(v_7=0)$} is the variation
$H^5(\overline{\TT}, Z_{F_{(v_1, \dots, v_6)}}, \QQ)$. Then it is
enough to show that the restriction
$\iota^\star (\mathcal{M}(A, \underline{0}))= \mathcal{M}(A^\prime,
\underline{0})$. This follows from the proof of the main result
of~\cite[Theorem 2.2]{FF} and it can be shown computationally with the
\texttt{Dmodules.m2} package~\cite{DmodulesSource} of
\texttt{Macaulay2}~\cite{M2}.
\end{proof}

\subsubsection*{Proof of Proposition \ref{pro:our-claim}.}

Lemma \ref{lem:Aprime-and-Hred} and Lemma \ref{lem:Stienstra-for-Aprime} imply that 
the local system of flat sections of  $\mathcal{D}/  \mathcal{D} \widetilde{H}$	
is the variation  $H^5(\overline{T}, Z_{F_{(v_1, \dots, v_6)}}, \QQ)$, 
where
\begin{equation} \label{eq:vi-and-alpha}
	 v_i=(-\alpha)^{k_i} \quad i=1, \dots, 6 \quad \text{and} \quad \sum_{i=1}^6 k_i \gamma_i=1
\end{equation}
Note that the hypersurface $Z_{f_{(v_1, \dots, v_6)} }\subset \TT$, with the $v_i$ as in \eqref{eq:vi-and-alpha} is (up to isomorphism) the hypersurface $Z_\alpha \subset \TT$ of Section \ref{sec:initial-set-up}. In what follows we omit the subscript $-_{(v_1, \dots, v_6)}$ from our notation.

Denote by $\widetilde{\HH}$ the local system of solutions to
$\widetilde{H}$. Since $\widetilde{H}
\in \mathcal{D}H$, there is an inclusion of local systems $\HH \hookrightarrow \widetilde{\HH}$. 
For all $\alpha \neq \alpha_0$ in $\CC^\times$ we have a short exact sequence:
\begin{equation*}
0 \to PH^4(Z_F, \QQ) \to H^{5}(\overline{\TT} , Z_F, \QQ) \to H^{5}(\overline{\TT}, \QQ)  \to 0	
\end{equation*}
see Remark \ref{rem:sequences}. 
Its dual is:
\begin{equation*}
0 \to H^5_c(\TT, \QQ) \to \mathrm{Hom}( H^{5}(\overline{\TT} , Z_F), \QQ) \to 	 PH_c^4(Z_F, \QQ)(-1) \to 0
\end{equation*}
where $PH^{\bullet}_c$ denotes primitive cohomology with compact support.

One has $H^5_c(\TT) \simeq \QQ$. Moreover, since $PH^4(Z_F, \QQ) \simeq PH^3(Z_f, \QQ)(-1)$, we have $PH_c^4(Z_F, \QQ)(-1) \simeq PH_c^3(Z_f, \QQ)$.  It follows that, for all $m \geq 1$, one has
\[  W_m \mathrm{Hom}( H^{5}(\overline{\TT} , Z_F), \QQ) \simeq W_m PH_c^3(Z_f, \QQ)
\] 
By Batyrev--Borisov's formula \cite{MR1408560},
the Hodge--Deligne numbers $h^{p,q}$ of $PH_c^3(Z_f, \QQ)$ are given by: 
\[
\begin{matrix}
\begin{array}{cccc}
0 &  &   &\\
0 & 4 &   & \\
1 & 7 & 4   &\\
1 & 1 & 0  & 0
\end{array} \end{matrix}
\]
where we  put $h^{p,q}$ in position $(p,q) \in \ZZ^2$, with $(0,0)$ lying in the bottom-left corner. 
Then, since $\HH$ is irreducible,  the weight-one variation it supports must be the variation $\gr^W_3 PH_c^3(Z_f, \QQ)(1)$ over $\CC^\times \setminus \{\alpha_0\}$.  Note that  $\gr^W_3 PH_c^3(Z_f, \QQ)= \gr^W_3 H_c^3(Z_f, \QQ)$. This concludes the proof. 

\qed


\bibliographystyle{amsalpha} 
\bibliography{bib-acgg}
\end{document}